\documentclass[a4paper,reqno,11pt]{amsart}
\usepackage{amsmath}
\usepackage{amssymb}
\usepackage{paralist, xspace, graphicx, url, amscd, euscript, mathrsfs,stmaryrd,epic,eepic,color}
\usepackage{cases}
\usepackage{enumitem}
\allowdisplaybreaks[4]

\newtheorem{thm}{Theorem}[section]
\newtheorem{prop}[thm]{Proposition}

\newtheorem{lem}[thm]{Lemma}

\newtheorem{cor}[thm]{Corollary}

\theoremstyle{definition}

\theoremstyle{remark}
\newtheorem{remark}[thm]{Remark}

\numberwithin{equation}{section}

\newcommand\OO{{\mathcal{O}}}


\newcommand{\mult}{\operatorname{mult}}

\newcommand{\pr}{\operatorname{pr}}
\newcommand{\vol}{\operatorname{vol}}
\newcommand{\codim}{\operatorname{codim}}

\usepackage{todonotes}

\begin{document}

\title{Algebraic reverse Khovanskii--Teissier inequality via Okounkov bodies}
\date{\today}
\author{Chen Jiang}
\address{Chen Jiang, Shanghai Center for Mathematical Sciences, Fudan University, Jiangwan Campus, Shanghai, 200438, China}
\email{chenjiang@fudan.edu.cn}
\author{Zhiyuan Li}
\address{Zhiyuan Li, Shanghai Center for Mathematical Sciences, Fudan University, Jiangwan Campus, Shanghai, 200438, China}
\email{zhiyuan\_li@fudan.edu.cn}

\begin{abstract}
Let $X$ be a projective variety of dimension $n$ over an algebraically closed field of arbitrary characteristic and 
let $A, B, C$ be nef divisors on $X$. We show that for any integer $1\leq k\leq n-1$,
$$
(B^k\cdot A^{n-k})\cdot (A^k\cdot C^{n-k})\geq \frac{k!(n-k)!}{n!}(A^n)\cdot (B^k\cdot C^{n-k}).
$$
The same inequality in the analytic setting was obtained by Lehmann and Xiao for compact K\"ahler manifolds using the Calabi--Yau theorem, while our approach is purely algebraic using (multipoint) Okounkov bodies. 
We also discuss applications of this inequality to B\'ezout-type inequalities and inequalities on degrees of dominant rational self-maps.
\end{abstract}

\keywords{Reverse Khovanskii--Teissier inequality, Okounkov bodies, intersection numbers, volumes}
\subjclass[2020]{14C20, 14M25, 14C17}
\maketitle
\pagestyle{myheadings} \markboth{\hfill C.~Jiang, Z.~Li \hfill}{\hfill Algebraic reverse Khovanskii--Teissier inequality via Okounkov bodies \hfill}


\section{Introduction}

In \cite{LX17}, Lehmann and Xiao have proved the so called reverse Khovanskii--Teissier inequality on compact K\"ahler manifolds. Namely, for nef $(1,1)$-classes $\alpha, \beta$ and $\gamma$ on a compact K\"ahler manifold of dimension $n$, we have 
\begin{align}\label{eq:LX}
 (\beta^k\cdot \alpha^{n-k})\cdot (\alpha^k\cdot \gamma^{n-k})\geq \frac{k!(n-k)!}{n!}(\alpha ^n)\cdot (\beta^k\cdot \gamma^{n-k}).
\end{align}
In fact such an inequality was first observed by Xiao \cite{Xia15} with a weaker constant $\frac{k!(n-k)!}{4n!}$ which can be improved to $\frac{k!(n-k)!}{n!}$ by the technique of Popovici \cite{Pop16} (cf. \cite[Remark 3.1]{Xia15}).
Such an inequality between intersection numbers plays an important role in the proof of Morse type inequalities (see also \cite{Pop16, Xia15}) and it also has interesting applications to dynamical degrees of rational maps \cite{BFJ08, Dan20}.
Besides, it has an interesting analogue in 
convex geometry \cite[Theorem~5.9]{LX17}
and applications in the study of convolution of convex valuations \cite[Theorem~2.9]{DX21}.

The proof of Popovici and Lehmann--Xiao depends on solving Monge--Amp\'ere equations which deeply relies on the Calabi--Yau theorem, and it has been asked in \cite[Remark 9.3]{LX'15} (the arXiv version of \cite{LX16}) whether there is an algebraic approach working for projective varieties defined over arbitrary fields. 
In \cite[Theorem~3.4.3]{Dan20}, by studying intersections of numerical cycles, Dang has proved a weaker form of \eqref{eq:LX} in the algebraic geometry setting with the constant $\frac{k!(n-k)!}{n!}$ replacing by $\frac{1}{(n-k+1)^k}$.

In this paper, we use the (multipoint) Okounkov bodies to prove this optimal inequality in the algebraic geometry setting. The main result is 

\begin{thm}\label{thm:main}
Let $X$ be a projective variety of dimension $n$ over an algebraically closed field of arbitrary characteristic.
Let $A, B, C$ be nef divisors on $X$. Then for any integer $1\leq k\leq n-1$,
\begin{align}
 (B^k\cdot A^{n-k})\cdot (A^k\cdot C^{n-k})\geq \frac{k!(n-k)!}{n!}(A^n)\cdot (B^k\cdot C^{n-k}).\label{eq:main}
\end{align}
Moreover, 
if $A, B, C$ are ample, then this inequality is strict.
\end{thm}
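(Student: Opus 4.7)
The plan is to translate the intersection-theoretic inequality \eqref{eq:main} into a convex-geometric inequality for Okounkov bodies, and then prove the latter by convex-analytic means. First I would reduce to the case when $A,B,C$ are ample: the nef case follows by writing each divisor as a limit $D+\varepsilon H$ with $H$ ample and letting $\varepsilon\to 0$, using continuity of intersection numbers, so the strict inequality for ample divisors subsumes the weak inequality for nef divisors. If necessary, I would also pass to a resolution of singularities or take a general hyperplane section to arrange the existence of an admissible flag on $X$.

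Next, using the Okounkov body construction---in the multipoint form, as suggested by the abstract---one attaches to each nef divisor $D$ a convex body whose Euclidean volume computes $D^n$ via $D^n=n!\operatorname{vol}(\Delta(D))$. The translation step aims to express each of $B^k\cdot A^{n-k}$, $A^k\cdot C^{n-k}$, $A^n$ and $B^k\cdot C^{n-k}$ in terms of mixed volumes of such bodies. Once this is in place, the theorem reduces to a convex-geometric statement of the form
\begin{equation*}
V(K[n-k], L[k])\cdot V(K[k], M[n-k]) \geq \tfrac{k!(n-k)!}{n!}\operatorname{vol}(K)\cdot V(L[k], M[n-k])
\end{equation*}
for convex bodies $K,L,M\subset\mathbb{R}^n$. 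I would prove this by a slicing argument, integrating cross-sectional (mixed) volumes along a fixed direction and applying the Brunn--Minkowski concavity of slice volumes (equivalently, a Pr\'ekopa--Leindler inequality on the marginals); an inequality of precisely this shape already appears as \cite[Theorem~5.9]{LX17}. Strictness for bodies with nonempty interior follows from the strict Brunn--Minkowski inequality and should propagate to strictness in the ample case of \eqref{eq:main}.

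The main obstacle is the Okounkov-body translation itself. The standard single-flag construction yields $A^n=n!\operatorname{vol}(\Delta(A))$ exactly, but for mixed intersections only gives the one-sided estimate $L_1\cdots L_n\geq n!\,V(\Delta(L_1),\ldots,\Delta(L_n))$. Because $B^k\cdot C^{n-k}$ appears on the right-hand side of \eqref{eq:main}, this estimate goes in the wrong direction: one would need an \emph{upper} bound on $B^k\cdot C^{n-k}$ by a mixed volume, not a lower bound. Remedying this is the role of the multipoint Okounkov body construction, which allows one to encode $B$ and $C$ via flags centered at distinct points of $X$ and thereby sharpen the relation between intersection numbers and mixed volumes. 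The key technical hurdle is to set up the multipoint bodies so that all four quantities in \eqref{eq:main} are captured simultaneously with inequalities in the correct direction; once achieved, chaining through the convex-geometric estimate closes the argument.
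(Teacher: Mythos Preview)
Your diagnosis of the obstacle is accurate, but your proposed remedy does not work, and this is the genuine gap. Multipoint Okounkov bodies (in the sense of \cite{Tru18}, which is what the paper uses) do not ``sharpen the relation between intersection numbers and mixed volumes'' for two different divisors $B,C$: a multipoint flag is a single flag whose bottom stratum consists of several points, and the construction partitions the Okounkov body of \emph{one} divisor into pieces whose volumes sum to $\vol(D)/k!$. It gives no mechanism for bounding $(B^k\cdot C^{n-k})$ from above by a mixed volume, so the wrong-direction inequality you flagged cannot be reversed this way, and the chain through the convex inequality of \cite[Theorem~5.9]{LX17} does not close.

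The paper avoids mixed volumes entirely. The key reformulation is to realize the three mixed intersection numbers as ordinary volumes or as a point count: for very general $B_1,\dots,B_k\in|B|$ and $C_1,\dots,C_{n-k}\in|C|$ with $Y=\bigcap B_i$ and $Z=\bigcap C_j$, one has $(B^k\cdot A^{n-k})=\vol_Y(A|_Y)$, $(A^k\cdot C^{n-k})=\vol_Z(A|_Z)$, and crucially $(B^k\cdot C^{n-k})=\#(Y\cap Z)=:N$. One builds an admissible flag $X_\bullet$ on $X$ with $X_i=B_1\cap\dots\cap B_i$, and shows two projection containments: $\pr_{>k}(\Delta_{X_\bullet}(A))\subset\Delta_{Y_\bullet}(A|_Y)$, and $\pr_{\le k}(\Delta_{X_\bullet}(A))\subset\Delta_{Z_\bullet,j}(A|_Z+\tfrac{1}{m}A_Z)$ for each of the $N$ points $p_j\in Y\cap Z$. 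The multipoint bodies enter only here, because cutting $X_k=Y$ by $Z$ yields $N$ points rather than one. Summing the product inclusion $\Delta_{X_\bullet}(A)\subset\Delta_{Z_\bullet,j}\times\Delta_{Y_\bullet}$ over $j$ and using $\sum_j\vol(\Delta_{Z_\bullet,j})=\vol_Z/k!$ produces the factor $N=(B^k\cdot C^{n-k})$ on the correct side with no mixed-volume loss. Strictness in the ample case is then obtained not from Brunn--Minkowski but by a separate perturbation argument: if equality held, replace $B$ by $B-tA$ and restrict to a hyperplane in $|A|$ to derive a contradiction via the inequality in one lower dimension.
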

 
We shall mention that the constant $ \frac{k!(n-k)!}{n!}$ in \eqref{eq:main} is optimal. For instance, if $X=\mathbb{P}^k\times \mathbb{P}^{n-k}$, one can take $A, B, C$ to be the divisors on $X$ of type $(1, 1)$, $(1, 0)$, $(0,1)$, respectively, then the equality holds in \eqref{eq:main}. It is interesting and natural to ask for the characterization of the equality case of \eqref{eq:main}, but the answer might be very complicated. One naive guess is that if the equality holds while both sides are non-zero, 
then $B$ and $C$ have numerical dimensions $k$ and $n-k$ respectively and $A$ lies on the plane spanned by $B$ and $C$ in the N\'eron--Severi group $\mathrm{NS}(X)\otimes_\mathbb{Z} \mathbb{R}$. We will give such a characterization when $X$ is a surface in Proposition~\ref{prop:equality}.

Our proof makes use of the connection between volumes of big divisors and volumes of (multipoint) Okounkov bodies discovered in \cite{LM09, KK12} and \cite{Tru18}. The desired inequality follows from a dedicated comparison between the Okounkov bodies associated to an admissible flag and the convex bodies associated to sub-flags obtained by cutting the admissible flag with very general hyperplanes. In this way, we actually obtain a more general result for restricted volumes, see Theorem~\ref{thm:1}, Theorem~\ref{thm:2}, and Theorem~\ref{thm:1restricted} for details. 
Another advantage of this method is that we only need to deal with the intersection theory of divisors, which is much easier to handle than that of algebraic cycles (cf. \cite{Dan20}). Very recently, Hu and Xiao \cite{HuXiao} give a purely combinatorial proof of Theorem~\ref{thm:main}.


As a consequence, we can get a B\'{e}zout-type inequality which was proved by Xiao \cite[Theorem~1.6]{Xia19} when the base field is $\mathbb{C}$, generalizing the
classical B\'{e}zout theorem for hypersurfaces.
Here we get a better constant than \cite[Theorem~1.6]{Xia19}.
\begin{cor}[{cf. \cite[Theorem~1.6]{Xia19}}]\label{cor:bezout} 
Let $X$ be a projective variety with an ample divisor
$H$ as polarization and let $A_1, \dots, A_r$ be nef divisor classes on $X$. Assume that $a_1, \dots, a_r\in \mathbb{Z}_{>0}$ and $|a| := \sum_{i=1}^r a_i \leq n$.
Let $Y_1,\dots,Y_r$ be subvarieties of cycle classes $A_1^{a_1}, \dots, A_r^{a_r}$, and assume that they have proper
intersection, then
$$
\deg_H(Y_1\cap\dots \cap Y_r)\leq \min_k\left\{\frac{\prod_{i=1}^r\binom{|a|}{a_i}}{\binom{|a|}{a_k}(H^n)^{r-1}}\right\}{\prod_{i=1}^r\deg_H(Y_i)}.
$$
\end{cor}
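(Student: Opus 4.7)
The plan is to translate the stated Bézout-type bound into an intersection-theoretic inequality, reduce to the full-dimensional case $|a|=n$ by Bertini, and then iterate the reverse Khovanskii--Teissier inequality (Theorem~\ref{thm:main}) to peel off the factors one at a time.

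\textbf{Translation and reduction.} Since each $Y_i$ has class $A_i^{a_i}$ and the $Y_i$ meet properly, $\deg_H(Y_i)=H^{n-a_i}\cdot A_i^{a_i}$ and $\deg_H(Y_1\cap\cdots\cap Y_r)=H^{n-|a|}\cdot\prod_i A_i^{a_i}$. Using the identity $\prod_i\binom{|a|}{a_i}/\binom{|a|}{a_k}=\prod_{i\neq k}\binom{|a|}{a_i}$, the corollary reduces to showing that for every index $k$,
\begin{equation*}
(H^n)^{r-1}\cdot H^{n-|a|}\cdot\prod_{i=1}^r A_i^{a_i}\;\leq\;\prod_{i\neq k}\binom{|a|}{a_i}\cdot\prod_{i=1}^r\bigl(H^{n-a_i}\cdot A_i^{a_i}\bigr).
\end{equation*}
Both sides scale by the same power $m^{rn-|a|}$ under $H\mapsto mH$, so I may assume $H$ is very ample. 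Cutting $X$ by $n-|a|$ very general members of $|H|$ (passing to $|mH|$ for $m\gg 0$ if necessary, so that Bertini produces an integral complete intersection in arbitrary characteristic) yields an integral projective subvariety $Y'\subset X$ of dimension $m=|a|$, on which $H|_{Y'}$ is ample and each $A_i|_{Y'}$ is nef. Every intersection number in the displayed inequality is computed by its counterpart on $Y'$ (for example $(H|_{Y'})^m=H^n$ and $(\prod_i A_i^{a_i})|_{Y'}=H^{n-|a|}\cdot\prod_i A_i^{a_i}$), so it suffices to treat the full-dimensional case $\sum_i a_i=\dim Y'$.

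\textbf{Iteration.} Working on $Y'$ and renaming $m$ as $n$, fix $k$ and peel off the remaining indices one at a time. At the step peeling $i\neq k$, apply Theorem~\ref{thm:main} with $A=H$, $B=A_i$, and take the ``$C$-side'' $C^{n-a_i}$ to be the mixed product of $n-a_i$ nef divisors formed by the $H$-factors accumulated so far together with the $A_j^{a_j}$'s not yet peeled. This yields the step inequality
\begin{equation*}
(H^n)\cdot\bigl(H^s\cdot A_i^{a_i}\cdot{\textstyle\prod_{j\in S}A_j^{a_j}}\bigr)\;\leq\;\binom{n}{a_i}\,(H^{n-a_i}\cdot A_i^{a_i})\,\bigl(H^{s+a_i}\cdot{\textstyle\prod_{j\in S}A_j^{a_j}}\bigr),
\end{equation*}
which replaces $A_i^{a_i}$ on the left by the pure contribution $\binom{n}{a_i}(H^{n-a_i}\cdot A_i^{a_i})$ on the right. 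After $r-1$ peelings (one per $i\neq k$) the residual mixed product collapses to $H^{n-a_k}\cdot A_k^{a_k}$, and telescoping gives the desired bound with constant $\prod_{i\neq k}\binom{n}{a_i}=\prod_{i\neq k}\binom{|a|}{a_i}$. Taking $\min_k$ completes the proof.

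\textbf{Main obstacle.} Theorem~\ref{thm:main} is stated only with a \emph{pure} power $C^{n-k}$ on the $C$-side, whereas each peeling step requires the mixed-intersection version where $C^{n-k}$ is replaced by a product of $n-k$ distinct nef divisors. Supplying this mixed extension is the essential technical input; the Okounkov-body comparison used to prove Theorem~\ref{thm:main} extends to it, and this is in fact the content of the restricted-volume generalizations announced in the introduction. Absent the mixed form, each peeling could be performed on a yet-smaller Bertini-cut subvariety, but that route is awkward because the $A_i$ are only nef (and not necessarily base-point-free), so the mixed-intersection RKT yields a substantially cleaner argument.
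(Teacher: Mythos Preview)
Your approach is correct and essentially identical to the paper's: translate to an intersection-number inequality, cut by $n-|a|$ general members of $|H|$ to reduce to the full-dimensional case, and then peel off one $A_i^{a_i}$ at a time via the reverse Khovanskii--Teissier inequality. Your flagged ``main obstacle'' is not actually an obstacle in this paper: the mixed version you need, with $C^{n-k}$ replaced by a product of $n-k$ distinct nef divisors, is exactly Theorem~\ref{thm:2} (not Theorem~\ref{thm:main}), and it is what the paper invokes for each peeling step. One cosmetic difference: the paper phrases the iteration as induction on $r$, performing a fresh Bertini cut at each stage (so the running ``$|a|$'' shrinks and one implicitly gets the slightly sharper constant $\prod_{i\neq k}\binom{a_1+\cdots+a_{\sigma(i)}}{a_{\sigma(i)}}$ for a suitable ordering $\sigma$, then uses $\binom{|a'|}{a_i}\le\binom{|a|}{a_i}$); your version stays on the single cut variety and peels there, which yields the stated constant directly.
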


There is another application on degrees of the iterations of rational self-maps on projective varieties. Let
$f: X \dashrightarrow X$ be a dominant rational self-map of a projective variety $X$ of dimension $n$ defined over an algebraically closed field of arbitrary characteristic and let $H$ be a nef and big divisor on $X$. For any integer $0\leq i\leq n$, the {\it $i$-th degree} of $f$ with respect to $H$ is defined by
$$\deg_{i, H} (f) = (\pi_1^* H^{n-i}\cdot \pi_2^*H^i),$$
where $\pi_1$ and $\pi_2$ are the projections from the normalization of the graph of $f$ in $X \times X$ onto the first and the second factor, respectively.
In \cite{Dan20}, Dang proved a weaker form of \eqref{eq:main} in order to study the sequence of intermediate degrees of the iterations of a dominant
rational self-map and to recover the results in \cite{BFJ08, DS04, Tru20}. As an application of Theorem~\ref{thm:main}, we give a new and simple proof of 
\cite[Theorem~1]{Dan20} with better constants.
\begin{cor}[{cf. \cite[Theorem~1]{Dan20}}]\label{cor:dynamics}
Let $X$ be a projective variety of dimension $n$ and let $H$ be a nef and
big divisor on $X$. Fix an integer $0\leq i\leq n$.
\begin{enumerate}
 \item For any dominant rational self-maps $f, g$
on $X$, 
$$
\deg_{i, H} (f \circ g)\leq \frac{\binom{n}{i}}{(H^n)}\deg_{i, H} (f)\cdot \deg_{i, H} (g).
$$

\item For any nef and big divisor $L$ on $X$ and any dominant rational self-maps $f$
on $X$, $$
\deg_{i, H} (f)\leq \frac{\binom{n}{i}^2(H^{n-i}\cdot L^i)\cdot (L^{n-i}\cdot H^i)}{(L^n)^2}\deg_{i, L} (f).
$$
\end{enumerate}
\end{cor}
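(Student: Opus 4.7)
The plan is to deduce both inequalities from Theorem~\ref{thm:main} applied to nef pullbacks on a suitably chosen projective $n$-fold, followed by projection-formula bookkeeping. The boundary cases $i=0$ and $i=n$ give equalities directly (since $\deg_{0,H}(f)=(H^n)$ and $\deg_{n,H}(f)=e(f)(H^n)$, where $e(f)$ denotes the topological degree of $f$), so I may assume $1\le i\le n-1$, which is exactly the range in which Theorem~\ref{thm:main} applies with $k=n-i$.

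For part (1), let $\Gamma_f,\Gamma_g\subset X\times X$ be the normalized graphs of $f$ and $g$, and let $Z$ be the normalization of an irreducible component of the fibre product $\Gamma_g\times_X\Gamma_f$, taken with respect to $\pi_2^g:\Gamma_g\to X$ and $\pi_1^f:\Gamma_f\to X$. Then $\dim Z=n$, and the four projections furnish three dominant morphisms $q_1,q_2,q_3:Z\to X$ such that $q_1$ is birational (because $\pi_1^g$ is), the induced map $Z\to\Gamma_f$ has generic degree $e(g)$, and $(q_1,q_3):Z\to X\times X$ is birational onto $\Gamma_{f\circ g}$ (verified on the open locus where $g$ and $f\circ g$ are both regular). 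The projection formula then gives
\[
(q_1^*H^{n-i}\cdot q_2^*H^i)=\deg_{i,H}(g),\quad (q_2^*H^{n-i}\cdot q_3^*H^i)=e(g)\deg_{i,H}(f),
\]
\[
(q_1^*H^{n-i}\cdot q_3^*H^i)=\deg_{i,H}(f\circ g),\quad (q_2^*H^n)=e(g)(H^n).
\]
Applying Theorem~\ref{thm:main} on $Z$ with $A=q_2^*H$, $B=q_1^*H$, $C=q_3^*H$ and $k=n-i$, and cancelling the common factor $e(g)$, produces (1), using $(n-i)!\,i!/n!=1/\binom{n}{i}$.

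For part (2), let $p_1,p_2:Z\to X$ be the projections from a resolution of the normalized graph of $f$, so that $p_1$ is birational while $p_2$ has generic degree $e(f)$. Set $H_j=p_j^*H$ and $L_j=p_j^*L$; all four are nef. Apply Theorem~\ref{thm:main} on $Z$ with $k=n-i$ twice. First, with $A=L_1$, $B=H_1$, $C=H_2$; the birationality of $p_1$ simplifies $(H_1^{n-i}\cdot L_1^i)=(H^{n-i}\cdot L^i)$ and $(L_1^n)=(L^n)$, giving
\[
(H_1^{n-i}\cdot H_2^i)\le \binom{n}{i}\frac{(H^{n-i}\cdot L^i)\,(L_1^{n-i}\cdot H_2^i)}{(L^n)}.
\]
Second, with $A=L_2$, $B=L_1$, $C=H_2$; the factor $e(f)$ in both $(L_2^n)=e(f)(L^n)$ and $(L_2^{n-i}\cdot H_2^i)=e(f)(L^{n-i}\cdot H^i)$ cancels, giving
\[
(L_1^{n-i}\cdot H_2^i)\le \binom{n}{i}\frac{\deg_{i,L}(f)\,(L^{n-i}\cdot H^i)}{(L^n)}.
\]
Multiplying these two inequalities yields (2).

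The only genuinely geometric step is the construction of the triple resolution $Z$ in (1) together with the birationality of $(q_1,q_3):Z\to\Gamma_{f\circ g}$; this is standard and reduces to comparing general points over the locus where $g$ and $f\circ g$ are both regular. Once the models $Z$ are in place, both parts amount to a mechanical application of Theorem~\ref{thm:main} to nef pullbacks, with the projection formula tracking (and cancelling) the dominant-map degrees $e(f)$ and $e(g)$.
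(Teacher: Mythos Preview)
Your proof is correct and follows the same approach as the paper: build a common model carrying three maps to $X$ and apply Theorem~\ref{thm:main} with $A$ the pullback via the middle map for part~(1), and apply Theorem~\ref{thm:main} twice on the graph with $A=\pi_1^*L$ then $A=\pi_2^*L$ for part~(2). Your fibre-product construction of $Z$ is slightly more explicit than the paper's, which simply posits a normal $W$ with generically finite maps $p_1,p_2,p_3:W\to X$ satisfying $p_2=p_1\circ g$, $p_3=p_2\circ f$ and divides through by $\deg(p_1)$ and $\deg(p_2)$, but the argument is otherwise identical.
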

This result is essential in the definition of the {\it dynamical degree} of a self-rational map (cf. \cite{BFJ08, Dan20, DS04, Tru20}).

This paper is organized as the following.
In Section~\ref{sec 2}, we introduce definitions and basic knowledge on (multipoint) Okounkov bodies.
In Section~\ref{sec 3}, we develop properties of Okounkov bodies and prove Theorem~\ref{thm:main} along with 
several general statements. In Section~\ref{sec 4}, we give the applications of the main theorem. In Section~\ref{sec equality}, we give a characterization of the equality case of Theorem~\ref{thm:main} on surfaces.

\section{Preliminaries}\label{sec 2}

\subsection{Notation and Conventions}
We work over an algebraically closed field of arbitrary characteristic.
We adopt the standard notation and definitions in \cite{Laz04, Laz'04, LM09}. 
A {\it variety} is reduced and irreducible.
A {\it divisor} on a projective variety always means a Cartier divisor. When the base field is uncountable, a property holds for a {\it very general} choice of data if it is satisfied away from
a countable union of proper closed subvarieties of the relevant parameter space.

\subsection{Volumes}

Let $X$ be a projective variety of dimension $n$ and let $D$ be a divisor on $X$. The {\it volume} of $D$ is the real number
$$
\vol_X(D)=\lim_{m\to \infty}\frac{h^0(X,\OO_X(mD))}{m^n/n!}.
$$
We say that $D$ is {\it big} if $\vol_X( D)>0$.
For more details and properties of volumes, we refer to \cite[2.2.C]{Laz04} and \cite[11.4.A]{Laz'04}. By the homogeneity of volumes, this definition can be extended to $\mathbb{Q}$-divisors. Note that if $D$ is a nef divisor, then $\vol_X(D)=(D^n)$.

\subsection{Restricted volumes}
We recall the notation in \cite[\S 2.4]{LM09}.
Let $V$ be a projective variety and let $D$ be a big divisor on $V$. The {\it augmented base locus} $\mathbf{B}_{+}(D) \subset V$ is defined to be
$\mathbf{B}_+(D) = \mathbf{B}(D-A)$ for any sufficiently small ample $\mathbb{Q}$-divisor $A$, where $\mathbf{B}(D-A)$ is the {\it stable base locus} of $D-A$.
Let $X$ be an irreducible closed subvariety of $V$ of dimension $n$, then the {\it restricted volume} of $D$
from $V$ to $X$ is defined by
$$
\vol_{V|X}(D)=\lim_{m\to \infty}\frac{\dim\left(\text{\rm Im}\left(H^0(V, mD)\stackrel{\text{restr}}{\longrightarrow} H^0(X, mD|_X)\right)\right)}{m^n/n!}.
$$

For a sufficiently divisible integer $m>0$, consider $\pi_m: V_m\to V$ to be the blowing-up of $V$ along the base ideal of $|mD|$, then we have
$$
\pi_m^*|mD|=|M_m|+E_m
$$
where $M_m$ is free, and $E_m$ is the fixed part.
If $X\not\subset \mathbf{B}(D)$, the {\it asymptotic intersection number} of $D$ and $X$ is defined to be
$$
\|D^n\cdot X\|=\limsup_{m\to \infty}\frac{(M_m^n\cdot X_m)}{m^n},
$$
where $X_m$ is the strict transform of $X$ on $V_m$ (cf. \cite[Definition~2.6]{ELMNP09}).
Recall that we have the following Fujita's approximation theorem of restricted volumes.
\begin{thm}[{\cite[Theorem~2.13]{ELMNP09}, \cite[Remark~3.6]{LM09}}]\label{thm:fujita}
Let $V$ be a projective variety and let $D$ be a big divisor on $V$. Let $X$ be an irreducible closed subvariety of $V$ of dimension $n$ such that $X\not \subset \mathbf{B}_+(D)$.
Then $$\vol_{V|X}(D)=\|D^n\cdot X\|.$$
\end{thm}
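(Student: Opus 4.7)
The plan is to identify both sides of the claimed equality with the same limit over Fujita-type approximations $\pi_m\colon V_m\to V$ of the base ideal of $|mD|$, exploiting that the movable parts $M_m$ are base-point-free so that their restricted volumes are governed directly by top self-intersections.

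The first ingredient I would establish is a lemma: if $N$ is a globally generated divisor on a projective variety $W$ and $Y\subset W$ is an irreducible $n$-dimensional subvariety not contracted by the morphism $W\to \mathbb{P}(H^0(W,N)^{\vee})$, then $\vol_{W|Y}(N)=(N^n\cdot Y)$. Indeed, global generation lets one cut $Y$ transversely by general members of $|\ell N|$, and asymptotic Riemann--Roch on $Y$ applied to the nef divisor $N|_Y$ shows that the image of $H^0(W,\ell N)\to H^0(Y,\ell N|_Y)$ has dimension $(N^n\cdot Y)\ell^n/n!+O(\ell^{n-1})$ as $\ell\to\infty$.

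Next I would compare the restriction data on $V$ and on $V_m$. Multiplication by the defining section of $E_m$ identifies $H^0(V_m,M_m)$ with $H^0(V,mD)$. The hypothesis $X\not\subset \mathbf{B}_+(D)$ implies that, for any sufficiently divisible $m$, the strict transform $X_m$ is neither contained in the exceptional locus of $\pi_m$ nor in any component of $E_m$, and in particular $X_m\not\subset \mathbf{B}(M_m)$. A diagram chase identifying the restriction of $f_m^*s$ to $X_m$ with the restriction of $s$ to $X$ (multiplied by the restriction of the section of $E_m$, which is nonzero on $X_m$) then shows that the images of the two restriction maps have the same dimension, so
\begin{align*}
\vol_{V|X}(mD)=\vol_{V_m|X_m}(M_m)=(M_m^n\cdot X_m)
\end{align*}
by the lemma above. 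Dividing by $m^n$ and using the homogeneity $\vol_{V|X}(mD)=m^n\vol_{V|X}(D)$ produces
\begin{align*}
\vol_{V|X}(D)=\limsup_{m\to\infty}\frac{(M_m^n\cdot X_m)}{m^n}=\|D^n\cdot X\|.
\end{align*}

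The main obstacle is justifying the identification of the two restriction maps, i.e.\ showing that neither the fixed divisor $E_m$ nor the exceptional locus of $\pi_m$ imposes extra conditions on sections restricted to $X_m$ compared with sections restricted to $X$. This is exactly where the assumption $X\not\subset \mathbf{B}_+(D)$ enters essentially: one passes to a single sufficiently high model dominating all relevant $V_m$, and on this model the hypothesis forces $X_m$ to avoid the loci on which the comparison could degenerate, so that the asymptotic dimensions of the two images agree.
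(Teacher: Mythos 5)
This statement is quoted in the paper from \cite[Theorem~2.13]{ELMNP09} and \cite[Remark~3.6]{LM09} without proof, so the only question is whether your argument actually establishes it; it does not, and the gaps sit exactly where the real content of the theorem lies. First, your key lemma is false as stated: for a globally generated $N$ on $W$ and $Y$ merely \emph{not contracted} by the morphism $\varphi$ defined by $|N|$, one does not get $\vol_{W|Y}(N)=(N^{\dim Y}\cdot Y)$. Asymptotic Riemann--Roch on $Y$ only computes $h^0(Y,\ell N|_Y)$; it says nothing about the image of the restriction map, which is the whole issue. Concretely, let $W$ be the blow-up of $\mathbb{P}^3$ along a line $C$, $N=\pi^*\mathcal{O}(1)$, and $Y$ a $2$-section of the ruled exceptional divisor $E\to C$: then $Y$ is not contracted, $(N\cdot Y)=2$, but every section of $\ell N$ is pulled back from $\mathbb{P}^3$, so the image of $H^0(W,\ell N)\to H^0(Y,\ell N|_Y)$ has dimension $\sim\ell=\tfrac12(N\cdot Y)\ell$, i.e.\ $\vol_{W|Y}(N)=1\neq 2$. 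The lemma holds when $Y$ maps \emph{birationally} onto its image, and verifying that $X_m$ maps birationally onto its image under the morphism defined by $|M_m|$ is not a formality: it uses the nontrivial characterization of $\mathbf{B}_+(D)$ as the complement of the largest open set on which $\varphi_{|mD|}$ is an isomorphism onto its image for large divisible $m$, a result of essentially the same depth as the theorem you are trying to prove.

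Second, the claimed identity $\vol_{V|X}(mD)=\vol_{V_m|X_m}(M_m)$ for a fixed sufficiently divisible $m$ is unjustified and in general false. The multiplication-by-$E_m$ identification $H^0(V_m,M_m)\cong H^0(V,mD)$ only works at level $\ell=1$, whereas both restricted volumes are asymptotic in $\ell$: for $\ell\geq 2$ one only has $H^0(V_m,\ell M_m)\subseteq H^0(V,\ell mD)$, since the fixed part of $|\ell mD|$ need not be $\ell E_m$. Your diagram chase therefore yields only $\vol_{V_m|X_m}(M_m)\leq \vol_{V|X}(mD)$, hence (after repairing the first gap) the inequality $\|D^n\cdot X\|\leq \vol_{V|X}(D)$. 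The opposite inequality $\vol_{V|X}(D)\leq\|D^n\cdot X\|$ --- the genuinely Fujita-type assertion that the free parts $M_m$ asymptotically capture the restricted linear series --- is not addressed at all, and "passing to a single model dominating all relevant $V_m$" cannot fix this, since no such model exists and the problem is asymptotic in $\ell$ for each fixed $m$. This direction is exactly what requires the machinery of \cite{ELMNP09} (uniform ample twists for restricted linear series) or the Okounkov-body argument for restricted graded linear series indicated in \cite[Remark~3.6]{LM09}.
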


\subsection{Okounkov bodies}
Okounkov bodies of big divisors were introduced in \cite{LM09, KK12} motivated by earlier works of Okounkov \cite{O1, O2}. There have been many interesting applications of Okounkov bodies in the study of geometric properties of divisors, for example, \cite{CHPW18, CHPW18b, CJPW19, CPW17b,CPW17a, Ito13,Jow10, KL17a, KL17, Roe16}.

We recall the definition of Okounkov bodies from \cite{LM09}.
Let $X$ be a projective variety of dimension $n$. Consider an \emph{admissible flag} $X_\bullet$ on $X$
$$
X_\bullet: X=X_0 \supseteq X_1\supseteq \dots \supseteq X_{n-1} \supseteq X_{n} = \{x \}
$$
where each $X_i$ is an irreducible closed subvariety of $X$ which is non-singular at the point $x$ and $\codim X_i=i$.
For a big divisor $D$ on $X$, we consider the $\mathbb{Q}$-linear system 
$$
|D|_{\mathbb{Q}}=\{D'\mid D\sim_\mathbb{Q} D'\geq 0\}
$$
and a valuation-like function
\begin{align*}
 \nu_{X_\bullet} : {}&|D|_{\mathbb{Q}} \to {\mathbb{R}}^{n}_{\geq 0}, \\
 {}&D' \mapsto \nu_{X_\bullet}(D')=(\nu_1, \nu_2, \ldots, \nu_n),
\end{align*}
where $\nu_i$ are defined inductively as follows:
\begin{enumerate}
 \item define $\nu_1:=\mult_{X_1}D'\in \mathbb{Q}_{\geq 0}$ and $D'_1:=D'-\nu_1X_1$ on $X_0$, and inductively,
 \item assuming that $\nu_i\in \mathbb{Q}_{\geq 0}$ and $D'_i$ on $X_{i-1}$ are defined, then define $\nu_{i+1}:=\mult_{X_{i+1}}(D'_i|_{X_i})\in \mathbb{Q}_{\geq 0}$ and $D'_{i+1}=D'_i|_{X_i}-\nu_{i+1}X_{i+1}$ on $X_i$.
\end{enumerate}
Here we remark that as $X_{i-1}$ is non-singular at $x$, $D'_i|_{X_i}$ is a well-defined $\mathbb{Q}$-divisor in a neighborhood of $x$ and $\mult_{X_{i+1}}(D'_i|_{X_i})$ can be well-defined.
The \emph{Okounkov body} of $D$ with respect to $X_\bullet$ is defined as
$$
\Delta_{X_\bullet}(D):=\text{the convex closure of }\nu_{X_\bullet}(|D|_{{\mathbb{Q}}}) \text{ in } {\mathbb{R}}^{n}_{\geq 0}.
$$
Here the definition is equivalent to \cite[Definition~1.8]{LM09} but we use $\mathbb{Q}$-linear systems instead of global sections as in \cite{LM09} to make the notation simpler. Such a formulation appears for example in \cite{CHPW18}.
By \cite[Theorem~A]{LM09}, we have
\begin{align}
 \vol_{{\mathbb{R}}^n}(\Delta_{X_\bullet}(D)) = \frac{1}{n!}\vol_X(D) \label{eq:vol1} 
\end{align}
for every admissible flag $X_\bullet$ on $X$.

\subsection{Multipoint Okounkov bodies}
In this paper, we will study the property of the Okounkov body of a given admissible flag cutting by very general hyperplanes (see Proposition~\ref{prop:former half}). For example, given an admissible flag $X_{\bullet}$ and a very general hyperplane $H$ on $X$, in order to naturally define an admissible flag $H_{\bullet}$, we need to pick a closed point in $X_{n-1}\cap H$. But in this way we lost the information of other points in $X_{n-1}\cap H$. So the natural idea is to consider the whole set of points $X_{n-1}\cap H$ instead of just picking one. To this end, we need to extend the definition of admissible flags and Okounkov bodies to multipoint admissible flags and multipoint Okounkov bodies.

Now we recall the definition of multipoint Okounkov bodies in \cite{Tru18}. For our purpose, we only introduce a special case.
Let $Z$ be a projective variety of dimension $k$. Consider a \emph{multipoint admissible flag} $Z_\bullet$ on $Z$
$$
Z_\bullet: Z=Z_0 \supseteq Z_1\supseteq \dots \supseteq Z_{k-1} \supseteq Z_{k} = \{p_1, \dots, p_N \}
$$
where $Z_k$ consists of $N$ distinct points and for $0\leq i<k$, each $Z_i$ is an irreducible closed subvariety of $Z$ which is non-singular at the points $p_1, \dots, p_N$ and $\codim Z_i=i$.
For $1\leq j\leq N$, denote by $Z_\bullet(p_j)$ the admissible flag 
$$
 Z=Z_0 \supseteq Z_1\supseteq \dots \supseteq Z_{k-1} \supseteq \{p_j \}.
$$
Then for a big divisor $D$ on $Z$, we can consider the functions $\nu_{Z_\bullet(p_j)}(D') \, (1\leq j\leq N)$ for $D'\in |D|_{\mathbb{Q}}$. Note that
$\nu_{Z_\bullet(p_j)}(D')$ only differs on the last coordinate in $\mathbb{R}^k$. So the lexicographical order of $\{\nu_{Z_\bullet(p_j)}(D')\mid 1\leq j\leq N\}$ is just the order of the last coordinates.
We define the subset $V_j(D)\subset |D|_\mathbb{Q}$ by
\begin{equation}\label{eq:Vj}
 V_j(D)=\left\{D'\in |D|_\mathbb{Q} ~\big |~ \nu_{Z_\bullet(p_j)}(D')< \nu_{Z_\bullet(p_i)}(D')\text{ for all } i\neq j\right\}. 
\end{equation}
The \emph{multipoint Okounkov body} of $D$ with respect to $Z_\bullet$ and $p_j$ is defined in \cite[Definition~3.4]{Tru18} as
$$
\Delta_{Z_\bullet, j}(D):=\text{the convex closure of }\nu_{Z_\bullet(p_j)}(V_j(D)) \text{ in } {\mathbb{R}}^{k}_{\geq 0}.
$$
By \cite[Theorem~A]{Tru18}, we have
\begin{align}
\sum_{j=1}^N\vol_{{\mathbb{R}}^k}(\Delta_{Z_\bullet, j}(D)) = \frac{1}{k!}\vol_Z(D) \label{eq:vol2}
\end{align}
for every multipoint admissible flag $Z_\bullet$ on $Z$.

By the homogeneity (cf. \cite[Proposition~3.10]{Tru18}), the definition of (multipoint) Okounkov bodies can be extended to big $\mathbb{Q}$-divisors. In particular, \eqref{eq:vol1} and \eqref{eq:vol2} hold for big $\mathbb{Q}$-divisors.

\begin{remark}
In \cite{Tru18}, it has been assumed that the base field is $\mathbb{C}$ and $Z$ is smooth. But as the proof of \cite[Theorem~A]{Tru18} follows the line of \cite[Theorem~A]{LM09}, it is not hard to check that \cite[Theorem~A]{Tru18} holds for any projective variety over any algebraically closed field. 
Here as in \cite[Remark~3.7]{LM09}, we do not need to assume that the base field is uncountable.
\end{remark}

\subsection{Cutting a divisor by general hyperplanes}

We will use the following lemma which is a direct consequence of Bertini's theorem.
\begin{lem}\label{lem:hypercut}
Let $X$ be a projective variety. 
Let $D$ be a $\mathbb{Q}$-divisor on $X$ and let $P$ be a prime divisor whose generic point lies in the smooth locus of $X$.
Then for a general very ample divisor $H$ on $X$, the following statements hold:
\begin{enumerate}
\item if $\dim X>2$, then $P|_H$ is a prime divisor whose generic point lies in the smooth locus of $H$;
\item if $\dim X=2$, then $P|_H$ is a reduced divisor (consisting of points) lies in the smooth locus of $H$;
\item $\mult_P D=\mult_{P'} D|_H$ for each irreducible component $P'$ of $P|_H$.
\end{enumerate}
\end{lem}

\begin{proof}
(1) and (2) follows from Bertini's theorem \cite[Theorem~II.8.18]{GTM52}. To get (3), we just need to choose $H$ general so that 
\begin{itemize}
\item for each irreducible component $P_1$ of $D$, $P_1|_H$ is reduced, and
\item for irreducible components $P_1$ and $P_2$ of $D$, $P_1|_H$ and $P_2|_H$ has no common irreducible component.
\end{itemize}
This is again by Bertini's theorem.
\end{proof}

\section{Inequalities between volumes via Okounkov bodies}
\label{sec 3}
\subsection{A comparision result on Okounkov bodies}\label{subsec:compare}
We define $\pr_{>k}:\mathbb{R}^n\to\mathbb{R}^{n-k} $ to be the projection of the last $n-k$ coordinates and $\pr_{\leq k}:\mathbb{R}^n\to\mathbb{R}^{k} $ to be the projection of the first $k$ coordinates. 

The following lemma gives a comparison of the Okounkov bodies of a certain admissible flag and its sub-flag via the natural projections.

\begin{lem}\label{lem:latter half}
Let $X$ be a projective variety of dimension $n$, let $D$ be a big divisor on $X$, and let $X_{\bullet}$ be an admissible flag. Fix an integer $1\leq k\leq n$. Denote $Y=X_{k}$ and $Y_{\bullet}=X_{k+\bullet}$. Suppose that $D|_Y$ is big and $\mathcal{O}_{X_{i-1}}(X_{i})|_{Y}$ is a semiample line bundle on $Y$ for $1\leq i\leq k$. Then
$$
\pr_{>k}(\Delta_{X_{\bullet}}(D))\subset \Delta_{Y_{\bullet}}(D|_Y).
$$

\end{lem}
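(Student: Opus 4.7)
The plan is to reduce to a pointwise verification. Since $\Delta_{X_\bullet}(D)$ is the convex closure of $\nu_{X_\bullet}(|D|_\mathbb{Q})$ and $\pr_{>k}$ is a linear projection (hence commutes with closed convex hulls of bounded sets), it suffices to show that for every $D' \in |D|_\mathbb{Q}$ the projected valuation $\pr_{>k}(\nu_{X_\bullet}(D'))$ already belongs to $\Delta_{Y_\bullet}(D|_Y)$.

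Fix such a $D'$ with $\nu_{X_\bullet}(D') = (\nu_1, \ldots, \nu_n)$ and run the inductive definition to produce an effective $\mathbb{Q}$-divisor $D'_k$ on $X_{k-1}$ satisfying $\mult_Y(D'_k) = 0$. Then $D'_k|_Y$ is a well-defined effective $\mathbb{Q}$-divisor on $Y$, and continuing the construction from $D'_k|_Y$ along $Y_\bullet$ simply reads off the last $n-k$ coordinates, giving $\nu_{Y_\bullet}(D'_k|_Y) = (\nu_{k+1}, \ldots, \nu_n) = \pr_{>k}(\nu_{X_\bullet}(D'))$. A routine tracking of $\mathbb{Q}$-linear equivalences through the successive subtractions and restrictions yields
\begin{equation*}
D'_k|_Y \sim_\mathbb{Q} D|_Y - \sum_{i=1}^{k} \nu_i\, \mathcal{O}_{X_{i-1}}(X_i)|_Y,
\end{equation*}
so to exhibit an effective $\mathbb{Q}$-divisor on $Y$ that is $\mathbb{Q}$-linearly equivalent to $D|_Y$ and has the same valuation as $D'_k|_Y$, one must add an effective representative of the correction $\sum_i \nu_i\, \mathcal{O}_{X_{i-1}}(X_i)|_Y$ whose $\nu_{Y_\bullet}$-valuation vanishes.

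The semiampleness hypothesis does exactly this. For each relevant $i$, a sufficiently divisible multiple of $\mathcal{O}_{X_{i-1}}(X_i)|_Y$ is base-point free and thus admits a section not vanishing at the point $x$; scaling gives an effective $\mathbb{Q}$-divisor $E_i \sim_\mathbb{Q} \mathcal{O}_{X_{i-1}}(X_i)|_Y$ with $x \notin \Supp(E_i)$. Since every subvariety used in the definition of $\nu_{Y_\bullet}$ contains $x$, this single condition forces $\nu_{Y_\bullet}(E_i) = 0$. Then $\tilde D := D'_k|_Y + \sum_i \nu_i E_i$ is an effective $\mathbb{Q}$-divisor with $\tilde D \sim_\mathbb{Q} D|_Y$, and by the additivity of $\nu_{Y_\bullet}$ on effective divisors we get $\nu_{Y_\bullet}(\tilde D) = \nu_{Y_\bullet}(D'_k|_Y) + 0 = \pr_{>k}(\nu_{X_\bullet}(D'))$, as required. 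The step I expect to require the most care is the boundary index $i = k$: the correction $\nu_k\, \mathcal{O}_{X_{k-1}}(X_k)|_Y$ involves the normal bundle $N_{Y/X_{k-1}}$, which is not directly covered by the stated hypothesis, so this contribution must either be arranged to be semiample by the geometric setup (for example when $X_k$ is cut out in $X_{k-1}$ by a very general section of a semiample linear series, which is the expected situation in the applications) or circumvented by replacing $D'$ within $|D|_\mathbb{Q}$ by an equivalent representative having $\nu_k = 0$ while preserving the last $n-k$ coordinates of its valuation.
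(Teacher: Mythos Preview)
Your argument is essentially the same as the paper's. The paper fixes $D'\in|D|_\mathbb{Q}$, runs the recursion to obtain $D'_k$ on $X_{k-1}$, notes that $\nu_{Y_\bullet}(D'_k|_Y)=(\nu_{k+1},\dots,\nu_n)$, and then says in one line that ``by our assumption, $D|_Y-D'_k|_Y$ is semiample'', which lets one replace $D'_k|_Y$ by some $\tilde D\in |D|_Y|_\mathbb{Q}$ with the same valuation. You have unpacked that one line into the explicit identity
\[
D'_k|_Y \sim_\mathbb{Q} D|_Y-\sum_{i=1}^{k}\nu_i\,\mathcal{O}_{X_{i-1}}(X_i)|_Y
\]
and the construction of the correcting divisors $E_i$ avoiding $x$, but the content is identical.

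Your caution about the boundary index $i=k$ is well placed. Tracking the linear equivalence classes, the difference $D|_Y-D'_k|_Y$ really does involve the normal bundle term $\nu_k\,\mathcal{O}_{X_{k-1}}(X_k)|_Y$, which the stated hypothesis $1\le i<k$ does not cover; the paper's proof simply asserts semiampleness of the whole difference without isolating this term. In the only place the lemma is invoked (the proof of Theorem~\ref{thm:1}), the flag is built so that each $X_i$ is cut out of $X_{i-1}$ by a very ample divisor $B_i$, hence $\mathcal{O}_{X_{k-1}}(X_k)|_Y\cong B_k|_Y$ is very ample as well and the issue disappears. So your first suggested fix (that the geometric setup in the applications makes the $i=k$ term semiample too) is exactly what happens; reading the hypothesis as $1\le i\le k$ would make the lemma's proof go through verbatim. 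Your second suggested workaround (adjusting $D'$ to force $\nu_k=0$) is not needed and would not obviously work in general.
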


\begin{proof}
Fix any $D'\in |D|_{\mathbb{Q}}$.
Suppose that $\nu_{X_{\bullet}}(D')=(\nu_1,\dots, \nu_n)$. 
Recall that for $0\leq i\leq n-1$ we define by induction that
$\nu_{i+1}=\mult_{X_{i+1}}D'_i|_{X_i}$ and
$D'_{i+1}=D'_i|_{X_i}-\nu_{i+1}X_{i+1}$.
Then by definition
$\nu_{Y_{\bullet}}(D'_k|_{Y})=(\nu_{k+1},\dots, \nu_n)$. 
By construction,
$$
D'_k|_{Y}\sim_\mathbb{Q} D|_Y-\sum_{i=1}^k \nu_iX_i|_Y,
$$
where we view $X_i|_Y$ as a divisor defined by the line bundle $\mathcal{O}_{X_{i-1}}(X_{i})|_{Y}$.
So by our assumption, $D|_Y-D'_k|_{Y}$ is semiample. Therefore, we can find $\tilde D\in |D|_Y|_{\mathbb{Q}}$ such that 
$\nu_{Y_{\bullet}}(\tilde{D})=\nu_{Y_{\bullet}}(D'_k|_{Y})=(\nu_{k+1},\dots, \nu_n)$. 
This concludes the desired inclusion.
\end{proof}

The following proposition describes the behavior of the Okounkov body of an admissible flag when cutting by very general hyperplanes.

\begin{prop}\label{prop:former half}
Let $X$ be a projective variety of dimension $n$ over an uncountable algebraically closed field, let $D$ be a big divisor on $X$, and let $X_{\bullet}$ be an admissible flag. Fix an integer $1\leq k\leq n-1$. 
For very general very ample divisors $H_1, \dots, H_{n-k}$ on $X$, denote $Z=H_1\cap \dots \cap H_{n-k}$, then there is a natural multipoint admissible flag $Z_\bullet$ on $Z$ given by $Z_i=X_i\cap Z$ for $0\leq i\leq k-1$, and $Z_{k}=X_{k}\cap Z=\{p_1,\dots, p_N\}$. Then
$$
\pr_{\leq k}(\Delta_{X_{\bullet}}(D))\subset \bigcap_{m\in \mathbb{Z}_{>0}}\Delta_{Z_{\bullet}, j}\left(D|_Z+\frac{1}{m}A_Z\right)
$$
for all $1\leq j\leq N$ and any ample divisor $A_Z$ on $Z$.
\end{prop}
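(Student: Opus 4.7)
The plan is to take an arbitrary point $(\nu_1,\dots,\nu_k)\in\pr_{\leq k}(\Delta_{X_{\bullet}}(D))$, realize it (up to closed convex hulls) as the first $k$ entries of a valuation $\nu_{X_{\bullet}}(D')$ with $D'\in |D|_{\mathbb{Q}}$, and then exhibit a divisor $\tilde D \in \bigl|D|_Z + \frac{1}{m}A_Z\bigr|_{\mathbb{Q}}$ lying in the defining set $V_j$ of the multipoint Okounkov body and satisfying $\nu_{Z_{\bullet}(p_j)}(\tilde D) = (\nu_1,\dots,\nu_k)$. Because the right-hand side of the inclusion is a closed convex subset of $\mathbb{R}^k_{\geq 0}$ and the left-hand side is independent of $m$, it is enough to treat each $m$ individually and to work with a countable family $\{D'_{\alpha}\}\subset |D|_{\mathbb{Q}}$ whose projected valuations are dense in $\pr_{\leq k}(\Delta_{X_{\bullet}}(D))$; such a family exists because the Okounkov body is determined by the valuations of sections of the countably many graded pieces $H^0(X,lD)$.

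I would set things up as follows. Iterated Bertini-type arguments guarantee that for very general $H_1,\dots,H_{n-k}$ the claimed geometric properties hold: $Z$ is irreducible of dimension $k$, each $Z_i=X_i\cap Z$ is an irreducible codimension-$i$ subvariety of $Z$ smooth at every $p_j$ for $i<k$, and $Z_k$ consists of $N$ reduced points. For each $D'_{\alpha}$, the divisor $(D'_{\alpha})_k$ appearing in the inductive definition of $\nu_{X_{\bullet}}$ is a fixed $\mathbb{Q}$-divisor on $X_{k-1}$ that does not contain $X_k$ as a component, so its support is a proper subvariety of $X_{k-1}$ missing the generic point of $X_k$. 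A very general choice of the $H_i$'s may therefore be taken to ensure, in addition, that each $p_j$ avoids the union of the countably many supports $\Supp((D'_{\alpha})_k)$. Under these conditions I would verify by induction on $i$ that $\nu_{Z_{\bullet}(p_j)}(D'_{\alpha}|_Z)$ agrees with $\nu_{X_{\bullet}}(D'_{\alpha})$ in its first $k$ entries for every $p_j$: transversality of $Z$ to each $X_i$ identifies the multiplicities along the corresponding strata for $i<k-1$, and the step $i=k-1$ uses $p_j\notin\Supp((D'_{\alpha})_k)$ to obtain exactly $\nu_k^{\alpha}$ rather than only a lower bound.

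At this stage $D'_{\alpha}|_Z$ need not lie in $V_j$, because the multipoint valuations at the $p_i$'s agree in the last coordinate, so I would break the tie using the ampleness of $A_Z$. For each $m$ and each $j$, since the points $p_1,\dots,p_N$ impose independent conditions on $|lA_Z|$ for $l$ sufficiently divisible, there exists $E_{m,j}\in \bigl|\frac{1}{m}A_Z\bigr|_{\mathbb{Q}}$ that avoids $p_j$ (and the strata $Z_1,\dots,Z_{k-1}$ near $p_j$) while containing each $p_i$ with $i\neq j$. Additivity of $\nu_{Z_{\bullet}(p)}$ on effective $\mathbb{Q}$-divisors then yields $\tilde D_{\alpha}:=D'_{\alpha}|_Z+E_{m,j}\in \bigl|D|_Z+\frac{1}{m}A_Z\bigr|_{\mathbb{Q}}$ with $\nu_{Z_{\bullet}(p_j)}(\tilde D_{\alpha})=(\nu_1^{\alpha},\dots,\nu_k^{\alpha})$ unchanged, whereas the last coordinate of $\nu_{Z_{\bullet}(p_i)}(\tilde D_{\alpha})$ strictly increases for every $i\neq j$; hence $\tilde D_{\alpha}\in V_j$ and $(\nu_1^{\alpha},\dots,\nu_k^{\alpha})\in\Delta_{Z_{\bullet},j}(D|_Z+\frac{1}{m}A_Z)$. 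Passing to closed convex hulls and intersecting over $m$ completes the argument. The main obstacle I foresee is arranging a single very general choice of $(H_1,\dots,H_{n-k})$ that simultaneously enforces Bertini-style transversality, reducedness of $Z_k$, and the avoidance of the countable family $\{\Supp((D'_{\alpha})_k)\}$; the saving feature is that Okounkov bodies are controlled by countably many finite-dimensional linear systems, so the full set of conditions remains countable.
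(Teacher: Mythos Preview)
Your proposal is correct and follows essentially the same route as the paper: reduce to a countable family of $D'\in|D|_{\mathbb{Q}}$ whose valuations determine the Okounkov body, choose the $H_i$ very general so that the first $k$ entries of $\nu_{X_\bullet}(D')$ coincide with the entries of $\nu_{Z_\bullet(p_s)}(D'|_Z)$ for every $s$, and then use an auxiliary member of $|\frac{1}{m}A_Z|_{\mathbb{Q}}$ that misses $p_j$ (and $Z_{k-1}$) but passes through the other $p_s$ to force $D'|_Z+\frac{1}{m}A'_Z\in V_j$. Your explicit avoidance condition $p_j\notin\Supp((D'_\alpha)_k)$ is exactly what underlies the paper's assertion that $\nu_k=\mult_{p_s}D'_{k-1}|_{Z_{k-1}}$ for all $s$, and the tie-breaking divisor plays the same role as the paper's $A'_Z$.
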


\begin{remark}
By \cite[Proof of Theorem~A]{Tru18},
if $\Delta_{Z_{\bullet}, j}(D|_Z)^{\circ}\neq \emptyset$, then $$\bigcap_{m\in \mathbb{Z}_{>0}}\Delta_{Z_{\bullet}, j}\left(D|_Z+\frac{1}{m}A_Z\right)=\Delta_{Z_{\bullet}, j}(D|_Z).$$
But we do not need this fact in this paper.
\end{remark}
\begin{proof}
By definition,
the set $\nu_{X_\bullet}(|D|_{{\mathbb{Q}}})$ lies in ${\mathbb{Q}}^{n}_{\geq 0}$, which is a countable set. So 
there exists a countable set $S\subset |D|_{\mathbb{Q}}$ such that $\Delta_{X_{\bullet}}(D)$ is the convex closure of $\{\nu_{X_{\bullet}}(D')\mid D'\in S\}$. 
It suffices to show that for all $D'\in S\subset |D|_{\mathbb{Q}}$ and for very general very ample divisors $H_1, \dots, H_{n-k}$ on $X$, we have 
\begin{align}
\pr_{\leq k}(\nu_{X_{\bullet}}(D'))\in \Delta_{Z_{\bullet}, j}\left(D|_Z+\frac{1}{m}A_Z\right)\label{eq:111}
\end{align} 
for all $m\in \mathbb{Z}_{>0}$.

In fact, it suffices to show that \eqref{eq:111} holds for a single $D'\in S$ and for very general very ample divisors $H_1, \dots, H_{n-k}$ on $X$. 
More precisely, for fixed very ample linear systems $\mathcal{L}_1, \dots, \mathcal{L}_{n-k}$ on $X$, if \eqref{eq:111} holds for every $D'\in S$ and every $H_i\in \mathcal{L}_i\setminus Z_{i, D'}$ where $Z_{i, D'}\subset \mathcal{L}_i$ is a countable union of proper closed subset (depending on $D'$) for each $i$, then \eqref{eq:111} holds for every $D'\in S$ and every $H_i\in \mathcal{L}_i\setminus \bigcup_{D'\in S}Z_{i, D'}$. 
Here $\bigcup_{D'\in S}Z_{i, D'}$ is again a countable union of proper closed subset of $\mathcal{L}_i$ as $S$ is countable.

Now we consider a fixed $D'\in S$.
Suppose that $\nu_{X_{\bullet}}(D')=(\nu_1,\dots, \nu_n)$.
Recall that for $0\leq i\leq n-1$ we define by induction that
$\nu_{i+1}=\mult_{X_{i+1}}D'_i|_{X_i}$ and
$D'_{i+1}=D'_i|_{X_i}-\nu_{i+1}X_{i+1}$.
By applying Lemma~\ref{lem:hypercut} inductively, by taking $H_1, \dots, H_{n-k}$ general, we get that 
\begin{itemize}
 \item $\nu_{i+1}=\mult_{Z_{i+1}}D'_i|_{Z_i}$ and
$D'_{i+1}|_{Z_i}=D'_i|_{Z_i}-\nu_{i+1}Z_{i+1}$ for $0\leq i\leq k-2$;
\item $\nu_{k}=\mult_{p_s}D'_{k-1}|_{Z_{k-1}}$ for all $1\leq s\leq N$.
\end{itemize}
As $A_Z$ is ample, we can find $A'_Z\in |A_Z|_{\mathbb{Q}}$ whose support is very ample which does not contain $p_j$ and $Z_{k-1}$ but contains $p_s$ for all $s\neq j$, in other words,
\begin{itemize}
 \item $\mult_{p_j}A'_Z=\mult_{Z_{k-1}}A'_Z=0$, and
 \item $\mult_{p_s}A'_Z=\mu_s>0$ for all $s\neq j$.
\end{itemize} 
Then from the construction, 
$$\nu_{Z_{\bullet}(p_j)}(D'|_Z+\frac{1}{m}A'_Z)=(\nu_1,\dots, \nu_k)$$
and 
$$\nu_{Z_{\bullet}(p_s)}(D'|_Z+\frac{1}{m}A'_Z)=(\nu_1,\dots, \nu_{k-1}, \nu_k+\frac{1}{m}\mu_s)$$
for $s\neq j$.
Then we have $D'|_Z+\frac{1}{m}A'_Z\in V_j(D|_Z+\frac{1}{m}A_Z)$ as defined in \eqref{eq:Vj} and hence $(\nu_1,\dots, \nu_k)\in \Delta_{Z_{\bullet}, j}(D|_Z+\frac{1}{m}A_Z)$.
\end{proof}

\subsection{The generalized reverse Khovanskii--Teissier inequality}
As the volume of Okounkov body can compute the volume of big divisors, the comparision result in \S\ref{subsec:compare} will yield a stronger version of the reverse Khovanskii--Teissier inequality as follows.
\begin{thm}\label{thm:1}
Let $X$ be a projective variety of dimension $n$ over an uncountable algebraically closed field. Fix an integer $1\leq k\leq n-1$. Let $D$ be a big divisor on $X$ and let $B_1, \dots, B_{k}, C_1, \dots, C_{n-k}$ be very general very ample divisors on $X$. Denote $Y=B_1\cap \dots \cap B_{k}$ and $Z=C_1\cap \dots \cap C_{n-k}$.
Then 
$$
\vol_Y(D|_Y)\cdot \vol_Z(D|_Z)\geq \frac{k!(n-k)!}{n!}\vol_X(D)\cdot (Y\cdot Z).
$$
\end{thm}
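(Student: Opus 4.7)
The plan is to combine the two comparison results from Section~\ref{subsec:compare} with the elementary slice estimate
\[
\vol_{\mathbb{R}^n}(\Delta) \leq \vol_{\mathbb{R}^k}(\pr_{\leq k}(\Delta)) \cdot \vol_{\mathbb{R}^{n-k}}(\pr_{>k}(\Delta)),
\]
valid for any bounded measurable $\Delta \subset \mathbb{R}^k \times \mathbb{R}^{n-k}$ because, by Fubini, every fiber of $\pr_{\leq k}|_{\Delta}$ sits inside $\pr_{>k}(\Delta)$. I apply this to the Okounkov body of $D$ for an admissible flag adapted simultaneously to $Y$ and $Z$.

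Concretely, I set $X_i := B_1 \cap \cdots \cap B_i$ for $0 \leq i \leq k$, so that $X_k = Y$, and extend to an admissible flag $X_\bullet$ on $X$ by picking any admissible flag on $Y$ below $X_k$. For very general $B_j$'s, Bertini makes this flag admissible; the semiampleness hypothesis of Lemma~\ref{lem:latter half} holds since each $\OO_{X_{i-1}}(X_i) = \OO_{X_{i-1}}(B_i|_{X_{i-1}})$ is even ample; and a Fujita decomposition $D \sim_\mathbb{Q} A + E$ with $A$ ample and $E \geq 0$ together with $Y \not\subset \Supp E$ shows $D|_Y$ is big, so $\Delta_{Y_\bullet}(D|_Y)$ is well defined. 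Setting $Z := C_1 \cap \cdots \cap C_{n-k}$, for very general $C_j$'s the scheme $Y \cap Z$ is reduced of degree $N := (Y \cdot Z)$, and $Z_i := X_i \cap Z$ for $0 \leq i \leq k-1$ together with $Z_k := Y \cap Z = \{p_1, \dots, p_N\}$ is a multipoint admissible flag on $Z$.

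Now Lemma~\ref{lem:latter half} yields $\pr_{>k}(\Delta_{X_\bullet}(D)) \subset \Delta_{Y_\bullet}(D|_Y)$, so by \eqref{eq:vol1} the right-hand factor in the slice estimate is bounded by $\tfrac{1}{(n-k)!}\vol_Y(D|_Y)$. Proposition~\ref{prop:former half}, applied with $H_i = C_i$, places $\pr_{\leq k}(\Delta_{X_\bullet}(D))$ inside $\Delta_{Z_\bullet, j}(D|_Z + \tfrac{1}{m}A_Z)$ for every $j$ and every $m$; averaging over $j = 1, \dots, N$ and invoking \eqref{eq:vol2},
\[
\vol_{\mathbb{R}^k}(\pr_{\leq k}(\Delta_{X_\bullet}(D))) \leq \frac{1}{N}\sum_{j=1}^{N} \vol_{\mathbb{R}^k}\bigl(\Delta_{Z_\bullet, j}(D|_Z + \tfrac{1}{m}A_Z)\bigr) = \frac{\vol_Z(D|_Z + \tfrac{1}{m}A_Z)}{N \cdot k!}.
\]
Letting $m \to \infty$ and using continuity of the volume function produces the bound $\tfrac{\vol_Z(D|_Z)}{(Y \cdot Z)\, k!}$. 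Substituting both bounds into the slice estimate for $\Delta = \Delta_{X_\bullet}(D)$, whose Euclidean volume is $\tfrac{1}{n!}\vol_X(D)$ by \eqref{eq:vol1}, and clearing denominators is the theorem.

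The conceptual heart of the argument — and the source of the sharp constant $\tfrac{k!(n-k)!}{n!}$ — is the averaging step: because Proposition~\ref{prop:former half} produces the same containment for all $N = (Y \cdot Z)$ points of $Y \cap Z$, the sum formula \eqref{eq:vol2} automatically contributes the intersection number $(Y \cdot Z)$ in the denominator. The remaining issues — Bertini admissibility of the flags $X_\bullet$ and $Z_\bullet$ for very general $B_j$, $C_j$, and continuity of $\vol_Z$ at the possibly boundary class $D|_Z$ — are routine; the only delicate check I anticipate is Bertini-type irreducibility in positive characteristic for the subvarieties $X_i \cap Z$, which is however standard for very general complete intersections.
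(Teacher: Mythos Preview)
Your proof is correct and follows essentially the same approach as the paper's: the paper phrases the key step as the containment $\Delta_{X_\bullet}(D)\subset \Delta_{Z_\bullet,j}(D|_Z+\tfrac{1}{m}A_Z)\times \Delta_{Y_\bullet}(D|_Y)$ for each $j$ and then sums over $j$, which is equivalent to your slice estimate combined with averaging over $j$. The additional checks you flag (Bertini admissibility, bigness of $D|_Y$, continuity of $\vol_Z$) are handled in the paper implicitly or by a one-line citation, and note that $D|_Z$ is in fact big for very general $C_j$ by the same Kodaira-type argument you give for $D|_Y$, so continuity is not a boundary issue.
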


\begin{proof}
It is easy to construct an admissible flag $X_{\bullet}$ such that $X_{i}=B_1\cap \dots \cap B_{i}$ for any $1\leq i\leq k$.
By applying Lemma~\ref{lem:latter half}, we get 
$$
\pr_{>k}(\Delta_{X_{\bullet}}(D))\subset \Delta_{Y_{\bullet}}(D|_Y).
$$
By applying Proposition~\ref{prop:former half} to the case $H_i=C_i$ for $1\leq i\leq n-k$, 
we get 
$$
\pr_{\leq k}(\Delta_{X_{\bullet}}(D))\subset \bigcap_{m\in \mathbb{Z}_{>0}}\Delta_{Z_{\bullet}, j}\left(D|_Z+\frac{1}{m}A_Z\right)
$$
for all $1\leq j\leq N$, where $Z=C_1\cap\dots \cap C_{n-k}$, $N=(X_k\cdot Z)=(Y\cdot Z)$, and $A_Z$ is an ample divisor on $Z$.
So
$$
\Delta_{X_{\bullet}}(D)\subset \Delta_{Z_{\bullet}, j}\left(D|_Z+\frac{1}{m}A_Z\right)\times \Delta_{Y_{\bullet}}(D|_Y)\subset \mathbb{R}^{k}\times \mathbb{R}^{n-k}
$$
for all $1\leq j\leq N$ and all $m\in\mathbb{Z}_{>0}$.
Combining with \eqref{eq:vol1} and \eqref{eq:vol2}, this yields
\begin{align*}
 {} \frac{1}{n!}(Y\cdot Z)\cdot \vol_X(D)= {}&N \vol_{\mathbb{R}^n}(\Delta_{X_{\bullet}}(D))\\
 \leq {}& \sum_{j=1}^N \vol_{\mathbb{R}^k}\left(\Delta_{Z_{\bullet}, j}\left(D|_Z+\frac{1}{m}A_Z\right)\right)\cdot \vol_{\mathbb{R}^{n-k}}(\Delta_{Y_{\bullet}}(D|_Y))
 \\
 ={}& \frac{1}{k!}\vol_Z\left(D|_Z+\frac{1}{m}A_Z\right)\cdot \frac{1}{(n-k)!}\vol_Y(D|_Y).
\end{align*}
We can conclude the assertion by taking $m\to \infty$, as $\vol_Z$ is a continuous function for big $\mathbb{Q}$-divisors by \cite[Corollary~4.12]{LM09}.
\end{proof}

As a consequence, we can obtain a general form of Theorem~\ref{thm:main}.

\begin{thm}\label{thm:2}
Let $X$ be a projective variety of dimension $n$. Fix an integer $1\leq k\leq n-1$.
Let $A, B_1, \dots, B_{k}, C_1, \dots, C_{n-k}$ be nef divisors on $X$. Then 
\begin{align}
 {}&(B_1\cdot\dots \cdot B_k\cdot A^{n-k})\cdot (A^k\cdot C_1\cdot\dots \cdot C_{n-k})\notag\\
 \geq {}&\frac{k!(n-k)!}{n!}(A^n)\cdot (B_1\cdot\dots \cdot B_k\cdot C_1\cdot\dots \cdot C_{n-k}).\label{eq:B1Bk}
\end{align}
Moreover, if $A, B_1, \dots, B_{k}, C_1, \dots, C_{n-k}$ are ample, then this inequality is strict.
\end{thm}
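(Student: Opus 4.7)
The plan is to deduce Theorem~\ref{thm:2} from Theorem~\ref{thm:1} by a sequence of standard reductions followed by an elementary intersection-theoretic computation. The key observation is that both sides of \eqref{eq:B1Bk} are polynomial in the numerical classes of the divisors and scale identically when any $B_i$ or $C_j$ is replaced by a positive integer multiple, which leaves room both for approximation by ample classes and for rescaling to the very ample situation required by Theorem~\ref{thm:1}.

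First I would extend the ground field to an uncountable algebraically closed extension, which preserves all intersection numbers and places us in the setting where Theorem~\ref{thm:1} applies. Next, by replacing each of $A,B_i,C_j$ with its sum with $\epsilon H$ for a fixed ample divisor $H$ and taking $\epsilon\to 0^+$ at the end, it suffices to prove the inequality when $A$ and all the $B_i,C_j$ are ample. Using homogeneity in each $B_i$ and $C_j$, I then replace each by a sufficiently large multiple $m_iB_i$, respectively $m'_jC_j$, so that all become very ample. Choosing very general members $\widetilde{B}_i\in|m_iB_i|$ and $\widetilde{C}_j\in|m'_jC_j|$, Bertini produces irreducible subvarieties $Y:=\widetilde{B}_1\cap\dots\cap\widetilde{B}_k$ of dimension $n-k$ with cycle class $(\prod_i m_i)B_1\cdots B_k$ and $Z:=\widetilde{C}_1\cap\dots\cap\widetilde{C}_{n-k}$ of dimension $k$ with class $(\prod_j m'_j)C_1\cdots C_{n-k}$. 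Applying Theorem~\ref{thm:1} to $D=A$ (which is ample, hence big, and restricts to an ample divisor on both $Y$ and $Z$) yields
$$
\vol_Y(A|_Y)\cdot \vol_Z(A|_Z)\;\geq\;\frac{k!(n-k)!}{n!}\,\vol_X(A)\cdot (Y\cdot Z).
$$
Since $A$ is nef, these volumes compute self-intersections via the projection formula: $\vol_X(A)=(A^n)$, $\vol_Y(A|_Y)=(A^{n-k}\cdot Y)=(\prod_i m_i)(A^{n-k}\cdot B_1\cdots B_k)$, and $\vol_Z(A|_Z)=(\prod_j m'_j)(A^{k}\cdot C_1\cdots C_{n-k})$; moreover $(Y\cdot Z)=(\prod_i m_i)(\prod_j m'_j)(B_1\cdots B_k\cdot C_1\cdots C_{n-k})$. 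Substituting and cancelling the common factor $(\prod_i m_i)(\prod_j m'_j)$ gives the weak form of \eqref{eq:B1Bk} in the ample case, and the nef case follows by letting $\epsilon\to 0^+$.

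The main obstacle is the strict inequality when $A,B_1,\dots,B_k,C_1,\dots,C_{n-k}$ are all ample, since the approximation argument above only delivers $\geq$. To upgrade this, I would trace back through the proof of Theorem~\ref{thm:1}: equality in \eqref{eq:B1Bk} would force each inclusion $\Delta_{X_\bullet}(A)\subseteq \Delta_{Z_\bullet,j}(A|_Z)\times \Delta_{Y_\bullet}(A|_Y)$ to be an equality of convex bodies (for every $j=1,\dots,N$), pinning down $\Delta_{X_\bullet}(A)$ as a common product decomposition and in particular forcing all the multipoint Okounkov bodies $\Delta_{Z_\bullet,j}(A|_Z)$ to coincide. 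For ample $A$ the Okounkov body has nonempty interior, and the freedom to vary the admissible flag $X_\bullet$ together with the very general hyperplanes cutting out $Z$ should rule out such a rigid product structure. Formalizing this rigidity cleanly is the main technical point that remains; a pragmatic fallback is to apply the weak inequality to $(A+\epsilon H,B_i,C_j)$ with $H$ ample and extract strict positivity from the mixed terms using ampleness of $H$.
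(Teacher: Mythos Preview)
Your derivation of the weak inequality \eqref{eq:B1Bk} is essentially the paper's own argument: base-change to an uncountable field, reduce by continuity and homogeneity to the very ample case, and invoke Theorem~\ref{thm:1} with $D=A$. Nothing to add there.

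The gap is in the strict-inequality clause. Neither of your two proposed routes is a proof. The Okounkov-body rigidity idea is only a heuristic: you have not explained why the product decomposition $\Delta_{X_\bullet}(A)=\Delta_{Z_\bullet,j}(A|_Z)\times\Delta_{Y_\bullet}(A|_Y)$ is actually impossible, and ``freedom to vary the flag'' does not help, since equality in \eqref{eq:B1Bk} only forces equality of volumes for the particular flag used, not for all flags. Your fallback of replacing $A$ by $A+\epsilon H$ is circular: after letting $\epsilon\to 0$ you recover only the weak inequality, and there is no mechanism for the ``mixed terms'' to deliver strictness for the original $A$ without already knowing a strict inequality somewhere.

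The paper's argument for strictness is a short perturbation-plus-induction trick that stays entirely at the level of intersection numbers. Assume all divisors are ample and equality holds; rescale so that $A$ is very ample and regard it as an irreducible divisor on $X$. For small $t>0$ the class $B_k-tA$ is still ample, so the weak inequality applied to $(A,B_1,\dots,B_{k-1},B_k-tA,C_1,\dots,C_{n-k})$, after subtracting the assumed equality and dividing by $t$, yields
\[
(B_1\cdots B_{k-1}\cdot A^{n-k+1})\cdot(A^k\cdot C_1\cdots C_{n-k})\;\le\;\frac{k!(n-k)!}{n!}\,(A^n)\cdot(B_1\cdots B_{k-1}\cdot A\cdot C_1\cdots C_{n-k}).
\]
On the other hand, the weak inequality applied on the $(n-1)$-dimensional variety $A$ to the restricted classes $A|_A,B_1|_A,\dots,B_{k-1}|_A,C_1|_A,\dots,C_{n-k}|_A$ (trivial when $k=1$) gives the reverse bound with the larger constant $\frac{(k-1)!(n-k)!}{(n-1)!}$. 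Since $\frac{(k-1)!(n-k)!}{(n-1)!}>\frac{k!(n-k)!}{n!}$ and all the intersection numbers involved are strictly positive by ampleness, these two bounds are incompatible.
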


\begin{proof}
After base change we may assume that the base field is uncountable. 
As the inequality is homogeneous and nef divisors are the limits of ample $\mathbb{Q}$-divisors in the N\'eron--Severi group $\mathrm{NS}(X)\otimes_\mathbb{Z} \mathbb{R}$, it suffices to prove the inequality \eqref{eq:B1Bk} for very ample divisors $A, B_1, \dots, B_{k}, C_1, \dots, C_{n-k}$ on $X$.
Then the inequality \eqref{eq:B1Bk} follows from Theorem~\ref{thm:1} by taking $D=A$.

For the last statement, suppose that $A, B_1, \dots, B_{k}, C_1, \dots, C_{n-k}$ are ample. Without loss of generality, we may assume that $A$ is very ample and is a subvariety of $X$. Assume to the contrary that the equality holds. Note that for any sufficiently small $t>0$, $B_k-tA$ is ample. So applying \eqref{eq:B1Bk} to $A, B_1, \dots, B_{k-1}, B_{k}-tA, C_1, \dots, C_{n-k}$, we get 
\begin{align*}
 {}&(B_1\cdot\dots \cdot B_{k-1} \cdot (B_k-tA)\cdot A^{n-k})\cdot (A^k\cdot C_1\cdot\dots \cdot C_{n-k})\\
 \geq {}&\frac{k!(n-k)!}{n!}(A^n)\cdot (B_1\cdot\dots \cdot B_{k-1} \cdot (B_k-tA)\cdot C_1\cdot\dots \cdot C_{n-k}).
\end{align*}
By the assumption, this implies that 
\begin{align}
 {}&(B_1\cdot\dots \cdot B_{k-1} \cdot A^{n-k+1})\cdot (A^k\cdot C_1\cdot\dots \cdot C_{n-k})\notag\\
 \leq {}&\frac{k!(n-k)!}{n!}(A^n)\cdot (B_1\cdot\dots \cdot B_{k-1} \cdot A\cdot C_1\cdot\dots \cdot C_{n-k}).\label{eq:last1}
\end{align}
On the other hand, applying \eqref{eq:B1Bk} to $A|_A, B_1|_A, \dots, B_{k-1}|_A, C_1|_A, \dots, C_{n-k}|_A$ on the variety $A$, we get 
\begin{align}
 {}&(B_1\cdot\dots \cdot B_{k-1}\cdot A^{n-k+1})\cdot (A^k\cdot C_1\cdot\dots \cdot C_{n-k})\notag\\
 \geq {}&\frac{(k-1)!(n-k)!}{(n-1)!}(A^n)\cdot (B_1\cdot\dots \cdot B_{k-1} \cdot A\cdot C_1\cdot\dots \cdot C_{n-k}).\label{eq:last2}
\end{align}
Here we remark that we need $k>1$ in order to apply \eqref{eq:B1Bk}, but inequality \eqref{eq:last2} holds trivially if $k=1$.
Then \eqref{eq:last1} and \eqref{eq:last2} yields 
$$
(A^n)\cdot (B_1\cdot\dots \cdot B_{k-1} \cdot A\cdot C_1\cdot\dots \cdot C_{n-k})\leq 0,
$$
a contradiction.
\end{proof}
\begin{remark}
From the proof of Theorem~\ref{thm:2}, it is easy to see that the inequality \eqref{eq:B1Bk} is strict as long as $A, B_k$ are ample and $(B_1\cdot\dots \cdot B_{k-1} \cdot A\cdot C_1\cdot\dots \cdot C_{n-k})>0$.
\end{remark}

\begin{remark}
Although we deduce Theorem~\ref{thm:2} from Theorem~\ref{thm:1}, it is not hard to see that Theorem~\ref{thm:2} implies Theorem~\ref{thm:1} conversely, as the volume of a big divisor can be approximated by ample divisors by Fujita’s approximation theorem (cf. \cite[\S 3.1]{LM09} or Theorem~\ref{thm:fujita}). This fact was pointed out by Jian Xiao.
\end{remark}

\subsection{Proof of Theorem~\ref{thm:main}} It is just a special case of Theorem~\ref{thm:2}.

\subsection{Further discussions on restricted volumes}

In this subsection we will discuss the generalization of Theorem~\ref{thm:1} to restricted volumes, which was suggested by Jian Xiao. 

\begin{thm}\label{thm:1restricted}
Let $V$ be a projective variety over an uncountable algebraically closed field and let $D$ be a big divisor on $V$. Let $X$ be an irreducible closed subvariety of $V$ of dimension $n$ such that $X\not \subset \mathbf{B}_+(D)$.
 Fix an integer $1\leq k\leq n-1$. 
 Let $B_1, \dots, B_{k}, C_1, \dots, C_{n-k}$ be very general very ample divisors on $V$. Denote $Y=B_1\cap \dots \cap B_{k}\cap X$ and $Z=C_1\cap \dots \cap C_{n-k}\cap X$.
Then 
$$
\vol_{V|Y}(D)\cdot \vol_{V|Z}(D)\geq \frac{k!(n-k)!}{n!}\vol_{V|X}(D)\cdot (Y\cdot Z)_X.
$$
\end{thm}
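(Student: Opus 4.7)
The plan is to combine Theorem~\ref{thm:2} with the Fujita approximation theorem for restricted volumes. For a sufficiently divisible positive integer $m$, let $\pi_m: V_m \to V$ be the blowing-up of $V$ along the base ideal of $|mD|$, so that $\pi_m^*|mD| = |M_m| + E_m$ with $M_m$ base-point-free (hence nef on $V_m$). Let $X_m$, $Y_m$, $Z_m$ be the strict transforms of $X$, $Y$, $Z$ in $V_m$. Since $X \not\subset \mathbf{B}_+(D) \supset \mathbf{B}(|mD|)$, the restriction $\pi_m|_{X_m}: X_m \to X$ is birational, so $X_m$ is a projective variety of dimension $n$, to which Theorem~\ref{thm:2} applies.

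Next I would check the geometric conditions needed to invoke Fujita approximation (Theorem~\ref{thm:fujita}) on $Y$ and $Z$. Because $X \not\subset \mathbf{B}_+(D)$, the set $X \cap \mathbf{B}_+(D)$ is a proper closed subvariety of $X$, and for very general very ample $B_i$ and $C_j$ a Bertini-type argument ensures that $Y$ and $Z$ contain a point of $X$ avoiding $\mathbf{B}_+(D)$, so $Y, Z \not\subset \mathbf{B}_+(D)$. The same genericity guarantees that $Y_m$ and $Z_m$ are simply $X_m \cap \pi_m^*B_1 \cap \cdots \cap \pi_m^*B_k$ and $X_m \cap \pi_m^*C_1 \cap \cdots \cap \pi_m^*C_{n-k}$ respectively, and that the intersection $Y \cdot Z$ on $X$ is transverse with
$(\pi_m^*B_1 \cdots \pi_m^*B_k \cdot \pi_m^*C_1 \cdots \pi_m^*C_{n-k} \cdot X_m) = (Y \cdot Z)_X$ via the projection formula.

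With this setup, apply Theorem~\ref{thm:2} on the projective variety $X_m$ to the nef divisors $A = M_m|_{X_m}$, $B_i' = \pi_m^*B_i|_{X_m}$, $C_j' = \pi_m^*C_j|_{X_m}$. Translating each intersection back to $V_m$ via projection and using the identification of $Y_m, Z_m$ above yields
\begin{align*}
(M_m^{n-k}\cdot Y_m)\cdot (M_m^{k}\cdot Z_m) \geq \frac{k!(n-k)!}{n!}(M_m^n\cdot X_m)\cdot (Y\cdot Z)_X.
\end{align*}
Dividing by $m^n$ gives
\begin{align*}
\frac{(M_m^{n-k}\cdot Y_m)}{m^{n-k}}\cdot \frac{(M_m^{k}\cdot Z_m)}{m^{k}} \geq \frac{k!(n-k)!}{n!}\frac{(M_m^n\cdot X_m)}{m^n}\cdot (Y\cdot Z)_X.
\end{align*}

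Finally, take the $\limsup$ over divisible $m$. Since all sequences are nonnegative with finite $\limsup$, the elementary estimate $\limsup a_m b_m \leq (\limsup a_m)(\limsup b_m)$ applies to the left-hand side, while the right-hand side converts the $\limsup$ linearly. By Theorem~\ref{thm:fujita} applied to each of $X$, $Y$, $Z$ (valid since none is contained in $\mathbf{B}_+(D)$), the three $\limsup$'s equal $\vol_{V|X}(D)$, $\vol_{V|Y}(D)$, $\vol_{V|Z}(D)$ respectively, giving the desired inequality. The main technical point I expect to need care with is the limit step: I need that the \emph{same} sequence of blow-ups computes the asymptotic intersection numbers for $X$, $Y$, and $Z$ simultaneously, which is exactly what Theorem~\ref{thm:fujita} provides through the common sequence $\{M_m\}$, combined with the sub-multiplicativity of $\limsup$ on the left and the preservation under scalar multiplication on the right.
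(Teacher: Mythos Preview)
Your proposal is correct and follows essentially the same route as the paper's proof: reduce to Theorem~\ref{thm:2} on the strict transform $X_m$ via the Fujita blow-ups, identify $Y_m$ and $Z_m$ with the pullback intersections by genericity, and pass to the limit using Theorem~\ref{thm:fujita}. Your handling of the limit step via $\limsup(a_m b_m)\leq(\limsup a_m)(\limsup b_m)$ for nonnegative sequences is actually more explicit than the paper's terse ``by taking $m\to\infty$''; the paper does exactly the same manipulation without spelling it out.
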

\begin{proof}
For any sufficiently divisible integer $m>0$, consider $\pi_m: V_m\to V$ to be the blowing-up of $V$ along the base ideal of $|mD|$, then we have
$$
\pi_m^*|mD|=|M_m|+E_m
$$
where $M_m$ is free, and $E_m$ is the fixed part.
Denote by $X_m$ the strict transform of $X$ on $V_m$. By taking 
$B_1, \dots, B_{k}, C_1, \dots, C_{n-k}$ very general, we may assume that $Y, Z\not \subset \mathbf{B}_+(D)$ and for every sufficiently divisible integer $m>0$, $Y_m=(\pi_m^{*}B_1\cdot \dots \cdot \pi_m^{*}B_k\cdot X_m)$ and $Z_m=(\pi_m^{*}C_1\cdot \dots \cdot \pi_m^{*}C_{n-k}\cdot X_m)$ as cycles, where $Y_m, Z_m$ are the strict transforms of $Y, Z$ on $V_m$ respectively.
Then by applying Theorem~\ref{thm:2} to $$M_m|_{X_m}, \pi^*_mB_1|_{X_m}, \dots, \pi^*_mB_{k}|_{X_m}, \pi^*_mC_1|_{X_m}, \dots, \pi^*_mC_{n-k}|_{X_m}$$ on $X_m$, we get 
\begin{align*}
 {}&( \pi^*_m B_1\cdot\dots \cdot \pi^*_m B_k\cdot M_m^{n-k}\cdot X_m)\cdot (M_m^k\cdot \pi^*_mC_1\cdot\dots \cdot \pi^*_mC_{n-k}\cdot X_m)\\
 \geq {}&\frac{k!(n-k)!}{n!}(M_m^n\cdot X_m)\cdot (\pi^*_mB_1\cdot\dots \cdot \pi^*_mB_k\cdot \pi^*_mC_1\cdot\dots \cdot \pi^*_mC_{n-k}\cdot X_m).
\end{align*}
By taking $m\to \infty$, we get
$$
\|D^k\cdot Y\|\cdot \|D^{n-k}\cdot Z\|\geq \frac{k!(n-k)!}{n!}\|D^{n}\cdot X\|\cdot (Y\cdot Z)_X.
$$
This concludes the desired inequality by Theorem~\ref{thm:fujita}.
\end{proof}

\section{Applications}\label{sec 4}
\subsection{Proof of Corollary~\ref{cor:bezout}}
It is equivalent to showing that
$$
(A_1^{a_1}\cdot \cdots \cdot A_r^{a_r}\cdot H^{n-|a|})\leq \min_k\left\{\frac{\prod_{i=1}^r\binom{|a|}{a_i}}{\binom{|a|}{a_k}(H^n)^{r-1}}\right\}{\prod_{i=1}^r(A_i^{a_i}\cdot H^{n-a_i})}.
$$
Without loss of generality, it suffices to show that 
\begin{align}
 (H^n)^{r-1}\cdot (A_1^{a_1}\cdot \cdots \cdot A_r^{a_r}\cdot H^{n-|a|})\leq {\prod_{i=2}^r\binom{|a|}{a_i}}{\prod_{i=1}^r(A_i^{a_i}\cdot H^{n-a_i})}.\label{eq:bezout last}
\end{align}
Without loss of generality, we may assume that $H$ is very ample. 
When $r=1$, \eqref{eq:bezout last} is trivial. Suppose that $r\geq 2$.
Take general elements $H_1,\dots, H_{n-|a|}\in |H|$ and take $Z=H_1\cap\dots\cap H_{n-|a|}$.
Applying Theorem~\ref{thm:2} to $A_r|_Z$ and $H|_Z$, we get
\begin{align*}
 {}&(H^n)\cdot (A_1^{a_1}\cdot \cdots \cdot A_r^{a_r}\cdot H^{n-|a|})\\
 ={}& (H^{|a|}|_Z)\cdot (A_1^{a_1}|_Z\cdot \cdots \cdot A_r^{a_r}|_Z)\\
 \leq{}&\binom{|a|}{a_r} (A_1^{a_1}|_Z\cdot \cdots \cdot A_{r-1}^{a_{r-1}}|_Z\cdot H^{a_r}|_Z)\cdot ( A_r^{a_r}|_Z\cdot H^{|a|-a_r}|_Z)\\
 ={}&\binom{|a|}{a_r} (A_1^{a_1}\cdot \cdots \cdot A_{r-1}^{a_{r-1}}\cdot H^{n-|a|+a_r})\cdot(A_r^{a_r}\cdot H^{n-a_r}).
\end{align*}
So we can prove inequality \eqref{eq:bezout last} by induction on $r$. 

\begin{remark}
The constant we obtain is slightly better than Xiao's, which is $\min_k\left\{\frac{\prod_{i=1}^r\binom{n}{a_i}}{\binom{n}{a_k}(H^n)^{r-1}}\right\}$, but the proof is basically the same as long as one knows Theorem~\ref{thm:2}. It remains interesting to find the optimal constant. We refer to \cite[\S 3.4]{Xia15} for some related discussions.
\end{remark}

\subsection{Proof of Corollary~\ref{cor:dynamics}}
(1) For any dominant rational self-maps $f, g$
on $X$, take a projective normal variety $W$ with generically finite morphisms $p_j: W\to X$ for $j=1, 2, 3$ such that $p_2=p_1\circ g$ and $p_3=p_2\circ f$. 
Then by the projection formula between $W$ and the graphs of $f, g$, 
\begin{enumerate}
 \item $\deg_{i, H} (f \circ g)=\frac{1}{\deg(p_1)}\cdot (p_1^*H^{n-i}\cdot p_3^*H^{i})$;
 \item $\deg_{i, H} (f)=\frac{1}{\deg(p_2)}\cdot (p_2^*H^{n-i}\cdot p_3^*H^{i})$;
 \item $\deg_{i, H} (g)=\frac{1}{\deg(p_1)}\cdot (p_1^*H^{n-i}\cdot p_2^*H^{i})$.
\end{enumerate}
If $i=0$ or $n$ the statement is trivially true. If $1\leq i\leq n-1$, by Theorem~\ref{thm:main}, we have
\begin{align*}
 \deg_{i, H} (f \circ g)= {}&\frac{1}{\deg(p_1)}\cdot (p_1^*H^{n-i}\cdot p_3^*H^{i})\\
 \leq {}& \frac{1}{\deg(p_1)}\cdot\frac{\binom{n}{i}}{(p_2^*H^n)} (p_1^*H^{n-i}\cdot p_2^*H^{i})\cdot (p_2^*H^{n-i}\cdot p_3^*H^{i}) \\
 ={}& \frac{\binom{n}{i}}{(H^n)}\deg_{i, H} (f)\cdot \deg_{i, H} (g).
\end{align*}

(2) Let $\pi_1$ and $\pi_2$ be the projections from the normalization of the graph of $f$ in $X \times X$ onto the first and the second factor, respectively.
If $i=0$ or $n$ the statement is trivially true. If $1\leq i\leq n-1$ then by applying Theorem~\ref{thm:main} repeatedly, one can get
\begin{align*}
 \deg_{i, H} (f)={}& (\pi_1^* H^{n-i}\cdot \pi_2^*H^i)\\
 \leq {}&\frac{\binom{n}{i}}{(\pi_1^*L^n)}(\pi_1^* H^{n-i}\cdot \pi_1^*L^i)\cdot (\pi_1^* L^{n-i}\cdot \pi_2^*H^i)\\
 \leq {}&\frac{\binom{n}{i}}{(\pi_1^*L^n)}(\pi_1^* H^{n-i}\cdot \pi_1^*L^i)\cdot \frac{\binom{n}{i}}{(\pi_2^*L^n)}(\pi_1^* L^{n-i}\cdot \pi_2^*L^i)\cdot (\pi_2^*L^{n-i}\cdot \pi_2^*H^i)\\
 = {}&\frac{\binom{n}{i}^2(H^{n-i}\cdot L^i)\cdot (L^{n-i}\cdot H^i)}{ (L^n)^2}\deg_{i, L} (f).
\end{align*}

\begin{remark}
As we apply the optimal inequality \eqref{eq:main}, we obtain better constants than Dang's in \cite[Theorem~1]{Dan20}. For example, the constant in \cite[Theorem~1(i)]{Dan20} is $\frac{(n-i+1)^i}{(H^n)}$. It remains interesting to find the optimal constants.
\end{remark}

\section{Characterization of the equality case on surfaces}\label{sec equality}

In this section, we give a characterization of the equality case of Theorem~\ref{thm:main} on surfaces. The proof is motivated by \cite[Proposition~1.15]{BFJ08}. For simplicity, we state the characterization for non-singular surfaces, and the general case can be easily worked out by taking a desingularization.
\begin{prop}\label{prop:equality}
Let $X$ be a non-singular projective surface.
Let $A, B, C$ be nef divisors on $X$. Then 
\begin{align*}
 2(B\cdot A)\cdot (A\cdot C)= (A^2)\cdot (B\cdot C)\neq 0
\end{align*}
if and only if the following conditions hold
\begin{enumerate}
 \item $A\equiv s B+t C$ in $\mathrm{NS}(X)\otimes_\mathbb{Z} \mathbb{R}$ for some $s, t>0$;
 \item $B^2=C^2=0$, $(B\cdot C)\neq 0$.
\end{enumerate}
\end{prop}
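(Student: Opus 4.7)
The reverse direction is a direct computation: if $A \equiv sB + tC$ with $B^2 = C^2 = 0$ and $s, t > 0$, one reads off $A \cdot B = t(B \cdot C)$, $A \cdot C = s(B \cdot C)$, and $A^2 = 2st(B \cdot C)$, from which both sides of the claimed equality equal $2st(B \cdot C)^2$, which is non-zero exactly when $B \cdot C \neq 0$.

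For the forward direction, write $a = A^2$, $b = B^2$, $c = C^2$, $p = B \cdot C$, $q = A \cdot C$, $r = A \cdot B$; all six are non-negative by nefness. The hypothesis $2qr = ap$ together with $ap \neq 0$ immediately forces $a, p > 0$ and hence $q, r > 0$, so in particular $A$ is big. The plan is to exploit the Hodge index signature $(1, \rho - 1)$ of the intersection form on $\mathrm{NS}(X) \otimes_\mathbb{Z} \mathbb{R}$ in two complementary ways.

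First, expanding the Gram matrix of $(A, B, C)$ gives $\det G = abc - bq^2 - cr^2 + p(2qr - ap)$, which under our hypothesis reduces to $\Delta := abc - bq^2 - cr^2$. Since $A^2 > 0$, the intersection form restricted to $\mathrm{span}(A, B, C)$ has at most one positive eigenvalue and total rank at most three, which forces $\det G \geq 0$ regardless of whether $A, B, C$ are linearly independent. On the other hand, the classical Hodge index inequalities applied to $(A, B)$ and $(A, C)$ give $r^2 \geq ab$ and $q^2 \geq ac$, whence $bq^2 + cr^2 \geq 2abc$. Combining with $\Delta \geq 0$ yields $abc \geq 2abc$, so $abc = 0$. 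To upgrade this to $b = c = 0$, I would assume without loss of generality $b = 0$; then $\Delta \geq 0$ forces $cr^2 \leq 0$, so either $c = 0$ or $r = 0$. The case $r = 0$ with $a > 0$ puts us in the equality case of Hodge index, forcing $B \equiv 0$ and hence $p = 0$, contradicting $p > 0$; so $c = 0$.

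With $b = c = 0$ established, the vector $v := pA - qB - rC$ satisfies $v \cdot A = v \cdot B = v \cdot C = 0$ (the first equation uses $2qr = ap$, the other two use $b = c = 0$), and so $v^2 = p(v \cdot A) - q(v \cdot B) - r(v \cdot C) = 0$. The strict form of Hodge index with respect to the big class $A$ then forces $v \equiv 0$ in $\mathrm{NS}(X) \otimes_\mathbb{Z} \mathbb{R}$, giving $A \equiv (q/p) B + (r/p) C$ with strictly positive coefficients $s = q/p$ and $t = r/p$, as required. The main obstacle I anticipate is justifying $\Delta \geq 0$ uniformly via the signature argument in the degenerate sub-cases when $A, B, C$ are not linearly independent; but there $\det G = 0$ automatically (the Gram matrix has rank at most two), so the subsequent bootstrap and conclusion proceed without change.
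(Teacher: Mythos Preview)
Your proof is correct, but the paper takes a shorter and somewhat different route. Instead of first proving $B^2=C^2=0$ via the Gram determinant and the pairwise Hodge inequalities, the paper directly sets $\Gamma=A-\tfrac{(A^2)}{2(A\cdot B)}B$, checks from the hypothesis that $(\Gamma\cdot C)=0$ and $\Gamma^2=\tfrac{(A^2)^2(B^2)}{4(A\cdot B)^2}\geq 0$, and then applies the Hodge index theorem once (via an auxiliary ample class $H$) to force $\Gamma$ to be a non-negative multiple of $C$. This immediately gives $A\equiv \tfrac{(A^2)}{2(A\cdot B)}B+\tfrac{(A^2)}{2(A\cdot C)}C$, and $B^2=C^2=0$ drops out afterward by intersecting this relation with $B$ and $C$. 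So the paper gets the linear relation first and the vanishing of $B^2,C^2$ as a consequence, whereas you do it in the opposite order.

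What each approach buys: the paper's argument is more economical---a single application of Hodge index does all the work---while yours is more systematic and makes the signature constraint $\det G\geq 0$ explicit, which is a reusable observation. One minor redundancy in your write-up: since you already established $r>0$ at the outset, the case ``$r=0$'' you discuss after assuming $b=0$ can never occur, so $c=0$ follows immediately from $-cr^2\geq 0$ without the extra step.
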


\begin{proof}
The ``if" part is obvious. We only deal with the ``only if" part.

Suppose that $
2(B\cdot A)\cdot (A\cdot C)= (A^2)\cdot (B\cdot C)\neq 0
$.
Then $(B\cdot A), (A\cdot C), (A^2), (B\cdot C)$ are all positive numbers.
Set $\Gamma=A-\frac{(A^2)}{2(A\cdot B)}B$.
Then $(\Gamma\cdot C)=0$ and $(\Gamma^2)=\frac{(A^2)^2(B^2)}{4(A\cdot B)^2}\geq 0$.

We claim that $\Gamma$ and $C$ are propotional in $\mathrm{NS}(X)\otimes_\mathbb{Z} \mathbb{R}$ by the Hodge index theorem. Fix any ample divisor $H$ on $X$, by our assumption, $C$ is not numerically trivial, so $(H\cdot C)\neq 0$. 
Then $((\Gamma-\frac{(\Gamma\cdot H)}{(C\cdot H)}C)\cdot H)=0.$
On the other hand, 
$((\Gamma-\frac{(\Gamma\cdot H)}{(C\cdot H)}C)^2)\geq 0.$
So the classical Hodge index theorem (see \cite[Theorem~V.1.9]{Har}) implies that $\Gamma-\frac{(\Gamma\cdot H)}{(C\cdot H)}C\equiv 0$. As they are propotional, we can also get $\Gamma\equiv \frac{(\Gamma\cdot A)}{(C\cdot A)}C$, which implies that
\begin{align}
 A\equiv \frac{(A^2)}{2(B\cdot A)}B+\frac{(A^2)}{2(C\cdot A)}C\label{ABC surface}
\end{align}
by the construction of $\Gamma$. The fact that $(B^2)=(C^2)=0$ can be obtained by intersecting \eqref{ABC surface} with $B, C$ respectively.
\end{proof}

\section*{Acknowledgments}
We are grateful to Jian Xiao for drawing our attention to this topic and many helpful comments.
We would like to thank Mingchen Xia for discussions on Okounkov bodies.
We would like to thank the referee for useful suggestions.

The authors were supported by NSFC for Innovative Research Groups (Grant No.~12121001) and partially supported by National Key Research and Development Program of China (Grant No.~2020YFA0713200). The second author was also supported by Shanghai Pilot Program for Basic Research (No. 21TQ00).
The authors are members of LMNS, Fudan University.
\bibliographystyle{plain}
\bibliography{main}

\end{document}